\theoremstyle{plain}
\newtheorem{theorem}{Theorem}
\newtheorem{proposition}{{Proposition}}
\newtheorem{lemma}{Lemma}[section]
\numberwithin{equation}{section}
\newcommand{\eps}{\epsilon}
\newcommand{\rt}{\circ}
\newcommand{\umn}{(G,\circ)}
\newcommand{\dg}{\mathrm{deg}(\circ)}
\newcommand{\G}{\mathcal{G}^{*}}
\newcommand{\ka}{\kappa}
\newcommand{\pr}[1]{\mathbf{Pr}\left [ #1 \right]}
\newcommand{\E}[1]{\mathbf{E}\left [ #1 \right ]}
\newcommand{\ind}[1]{\mathbf{1}_{\{#1\}}}
\newcommand{\nb}[1]{\mathrm{NBW}(#1)}
\newcommand{\cev}[1]{\overset{\shortleftarrow}{#1}}
\title{A lower bound on the spectrum of unimodular networks}
\author{Mustazee Rahman}
\address{Department of Mathematical Sciences, Durham University, UK.}
\email{mustazee@gmail.com}
\keywords{Alon-Boppana theorem, graph limit, random walk, spectral graph theory, unimodular network, volume growth.}
\subjclass[2010]{Primary: 05C50, 60B20; Secondary: 05C80, 05C81}
\date{}
\begin{document}

\maketitle

\begin{abstract}
\normalsize
Unimodular networks are a generalization of finite graphs in a stochastic sense.
We prove a lower bound to the spectral radius of the adjacency
operator and of the Markov operator of an unimodular network in terms of its
average degree. This allows to prove an Alon-Boppana type bound for the
largest eigenvalues in absolute value of large, connected, bounded degree graphs,
which generalizes the Alon-Boppana theorem for regular graphs.

A key step is establishing a lower bound to the spectral radius of an unimodular tree
in terms of its average degree. Similarly, we provide a lower bound on the volume
growth rate of an unimodular tree in terms of its average degree.
\end{abstract}

\section{Introduction} \label{sec:intro}

The Alon-Boppana theorem \cite{AB} states that if $G_n$ is a sequence of
finite, connected, $d$-regular graphs with $|G_n| \to \infty$ then the second largest
eigenvalue of the adjacency matrix of $G_n$ in absolute value, say $\sigma_2(G_n)$,
satisfies $\liminf_n \, \sigma_2(G_n) \geq 2\sqrt{d-1}$. The quantity $2 \sqrt{d-1}$ is the
spectral radius of the $d$-regular tree, which represents the exponential growth rate of the number of
closed walks in the $d$-regular tree around a fixed vertex. Another version of the
theorem by Serre \cite{Serre} states that for any $\eps > 0$, there is a positive constant $c(\eps,d)$
such that any finite $d$-regular graph has at least $c(\eps,d)$-proportion of
its eigenvalues having absolute value larger than $2\sqrt{d-1} - \eps$.

What can be said about these types of spectral lower bounds for non-regular or even
infinite graphs? This paper provides such bounds for unimodular networks, a generalization
of finite graphs to a stochastic setting. Using the framework of local convergence of graphs
this provides lower bounds to the top eigenvalues in absolute value of finite, bounded degree
graphs in terms of their average degree. Before stating these results we explain some of the background.

Greenberg \cite{Greenberg} extended the aforementioned theorem of Serre to arbitrary finite graphs,
proving the following. Let $G$ be a locally finite, connected graph with a countable number of vertices.
Let $W_k(G,x)$ be the set of closed walks in $G$ of length $k$ starting from a vertex $x$.
Its spectral radius $\rho(G)$ is the operator norm of the adjacency matrix of $G$ acting on $\ell^2(G)$.
Greenberg proved that for any tree $T$ and any $\eps > 0$, there is a constant $c(\eps,T) > 0$
such that if a finite graph $G$ has universal cover $T$ then at least $c(\eps,T)$-proportion of its eigenvalues
have absolute value at least $\rho(T) - \eps$. (See \cite{Lubotzky} where the result is stated as well.)
Various strengthenings of the Alon-Boppana theorem have later been proved in \cite{Cioaba, Mohar},
and \cite{Elek} gives a Cheeger bound for graph Laplacians.

Afterwards, Hoory \cite{Hoory} proved that if $G$
is a finite graph with $m$ edges that is not a tree, and $T$ is its universal
cover, then $\rho(T) \geq 2 \sqrt{\Lambda}$ where
$\Lambda = \prod_{v \in G} (\deg(v) -1)^{\deg(v)/2m}$. It can be shown that
$\Lambda \geq d_{\mathrm{av}}(G)-1$, where $d_{\mathrm{av}}(G)$ is
the average degree of $G$. Combining Greenberg's theorem with Hoory's implies
that the set of finite and connected graphs sharing a common universal cover
$T$ has the property that for any $\eps > 0$, any graph $G$ from this set
has at least $c(\eps,T)|G|$ eigenvalues with absolute value at least
$2\sqrt{d_{\mathrm{av}}(G)-1} - \eps$.

Sharing a common universal cover is a form of spatial homogeneity for graphs.
Indeed, if two finite graphs have a common universal cover then they also have a
common finite cover \cite{Leigh}. This implies, for instance, that both graphs have the
same spectral radius, average degree, and also the same degree distribution.
In order to prove Alon-Boppana type bounds it is necessary to have some form of
spatial homogeneity. As an example, if the complete graph on $n$-vertices is glued to a path
of length $n$ at a common vertex then the average degree of the resulting graph is at least
$n/2$ while all but the largest eigenvalues have absolute value at most 2.

We consider a stochastic form of spatial homogeneity whereby graphs look homogenous around most vertices.
This is the notion of \emph{unimodular networks}. Roughly speaking, a unimodular network is a
random rooted graph, possibly infinite, that is homogeneous in the sense that shifting the
root to its neighbour does not change the distribution; Section \ref{sec:unimodular} contains the definition.
Finite connected graphs with a uniform random choice of root are unimodular.
Several examples and a rather thorough discussion about unimodular networks may also
be found in \cite{AL} and references therein.

Under natural assumptions, we prove the spectral radius of a unimodular network
is at least $2\sqrt{d_{\mathrm{av}}-1}$, where $d_{\mathrm{av}}$ is the expected
degree of the root. A similar lower bound is proved for the spectral radius of its
simple random walk, which seems to be new even for large finite graphs.
As a consequence of these bounds one finds an analogue of Serre's theorem for
the adjacency and Markov operators of unimodular networks.

We also derive Alon-Boppana type bounds for the eigenvalues of the adjacency matrix and
of the simple random walk (Markov operator) for any growing sequence of connected, bounded degree graphs.
Regarding the adjacency matrix, suppose $G_n$ is a sequence of finite, connected, bounded degree graphs
with size $|G_n| \to \infty$. Then the $j$-th largest eigenvalue of $G_n$ in absolute value, say $\sigma_j(G_n)$,
satisfies $\liminf_n\, \sigma_j(G_n) \geq \liminf_n \, 2\sqrt{d_{\mathrm{av}}(G_n)-1}$.

We also prove that the volume growth rate of a unimodular tree with no leaves is at least
$d_{\mathrm{av}}-1$, where $d_{\mathrm{av}}$ is the expected degree of the root.
This in turn provides a lower bound on the growth rate of non-backtracking walks in certain unimodular networks.

\subsection{Unimodular network and mass transport principle} \label{sec:unimodular}

Let $(G,x)$ be a rooted graph where the distinguished vertex $x$ is the root,
$G$ is locally finite, has a countable number of vertices and is connected.
Two such rooted graphs are isomorphic if there is a graph isomorphism between them that
takes the root of one graph to the other's. Let $\G$ be the set of isomorphism
classes of such rooted graphs. The distance between $(G, x), (H,y) \in \G$ may be defined as
$1/(1+R)$, where $R = \min \{r: B_r(G,x) \cong B_r(H,y)\}$ and $B_r(G,x)$ is the
$r$-neighbourhood of $x$ in $G$. With this distance, $\G$ is a Polish space.
A \emph{random rooted graph} is a Borel probability measure on $\G$, which is conveniently
realized as a $\G$-valued random variable.

A random rooted graph $\umn$ is a \emph{unimodular network} if
\begin{equation} \label{eqn:MTP}
\E{ \sum_{x \in V(G)} f(G,\circ,x)} = \E{ \sum_{x \in V(G)} f(G,x,\circ)}\end{equation}
for every non-negative and measurable function $f$ defined on the set of isomorphism classes
of doubly rooted graphs $(G,x,y)$. Equation (\ref{eqn:MTP}) is called the mass transport principle.
To verify unimodularity it suffices that the mass transport principle holds only for those $f$ that satisfy
$f(G,x,y) = 0$ if $x$ and $y$ are not neighbours in $G$; see \cite[Proposition 2.2]{AL}.
\smallskip

\paragraph{\textbf{Examples.}}
A finite graph $G$ rooted at a uniformly random vertex $\circ$ of $G$ is a unimodular network.
The Cayley graph of any finitely generated group, rooted at its identity, is a deterministic unimodular network.
So the lattices $\mathbb{Z}, \mathbb{Z}^2, \ldots$ are unimodular networks,
as are the infinite regular trees $\mathbb{T}_3, \mathbb{T}_4$, etc. Examples of unimodular
trees include periodic trees, Poisson-Galton-Watson trees, and more generally, unimodular
Galton-Watson trees \cite[Examples 1.1 and 10.2]{AL}.
\smallskip

\paragraph{\textbf{Local convergence.}}
The space of random rooted graphs carries the topology of
weak convergence: $(G_n,\rt_n)$ converges to $\umn$ if
$$\E{f(G_n,\rt_n)} \to \E{f(\umn)}$$ for every
bounded and continuous $f : \G \to \mathbb{R}$.
Restricted to unimodular networks, this provides the natural notion of convergence.
The limit of a sequence of unimodular networks is also a unimodular network; see \cite[Lemma 2.1]{Bor}.
This notion of convergence of unimodular networks, especially for finite graphs rooted uniformly at random,
is called local convergence or also Benjamini-Schramm convergence as they formulated the concept \cite{BS}.


\paragraph{\textbf{Spectral radius.}}
Recall that $W_k(G,x)$ is the set of closed walks in $G$ of length $k$ starting from $x$.
The spectral radius of a unimodular network $\umn$ is defined to be
$$ \rho(G) = \lim_{k \to \infty} \,\E{\,|W_{2k}(G,\rt)|\,}^{\frac{1}{2k}}.$$
The quantity $\E{|W_k(G,\rt)|}$ is in fact the $k$-th moment of a Borel probability
measure of $\mathbb{R}$ called the spectral measure of $\umn$, as explained further in Section \ref{sec:pre}.
The spectral radius is then the largest element in absolute value in the support of the spectral measure.
If $G$ is a finite graph then its spectral measure is the empirical measure of the eigenvalues
of its adjacency matrix.

Similarly, we can define the spectral measure and spectral radius of the simple random walk (SRW)
on $\umn$. For $(G,x) \in \G$, let $p_{k}(G,x)$ be the $k$-step return probability of the SRW
on $(G,x)$ started from vertex $x$. The spectral radius of the SRW on a unimodular network $\umn$ is
$$\rho_{\mathrm{SRW}}(G) = \lim_{k \to \infty} \, \E{p_{2k}(G,\rt)}^{\frac{1}{2k}}.$$
\smallskip

\paragraph{\textbf{Universal cover.}}
The universal cover $T_G$ of a connected, locally finite graph $G$ is the unique tree for which
there is a surjective graph homomorphism $\pi : T_G \to G$, called cover map, such that $\pi$
is an isomorphism on the 1-neighbourhood of every vertex. For $(G,x) \in \G$, let $(T_G,\hat{x})$
be its universal cover rooted at any $\hat{x}$ such that $\pi(\hat{x}) = x$
(all such $(T_G,\hat{x})$ have the same rooted isomorphism class). The cover map sends
closed walks in $T_G$ starting from $\hat{x}$ to closed walks in $G$ from $x$ in an injective manner.
Thus, $\rho(G) \geq \rho(T_G)$. The SRW on $(G,x)$ is the projection of the SRW on $(T_G,\hat{x})$
by the cover map. Therefore, $\rho_{\mathrm{SRW}}(G) \geq \rho_{\mathrm{SRW}}(T_G)$.
If $(G,\rt)$ is a unimodular network then its universal cover tree $(T_G,\hat{\rt})$
is also unimodular. Here, $(T_G,\hat{\rt})$ is constructed for every sample outcome of $\umn$.

\subsection{Statement of results} \label{sec:theorems}

\begin{theorem} \label{thm:cover}
Let $(T,\rt)$ be a unimodular tree with $\E{\dg} < \infty$ and no leaves almost surely. Then
$$\rho(T) \geq 2 \,\exp{\left \{\frac{\E{\dg \log (\sqrt{\dg -1})}}{\E{\dg}} \right \}} \geq 2 \sqrt{\E{\dg} - 1}\,.$$
Additionally, if $(T,\rt)$ has deterministically bounded degree then
$$\rho_{\mathrm{SRW}}(T) \geq 2\,\exp{\left \{\frac{\E{\dg \log \Big (\frac{\sqrt{\dg -1}}{\dg} \Big)}}{\E{\dg}} \right \}}
\geq \frac{2 \,\E{\dg} \sqrt{\E{\dg}-1}}{\E{\dg^2}}.$$
\end{theorem}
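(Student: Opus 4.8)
The plan is to treat the sharp exponential estimates — $\rho(T)\ge 2\Lambda$ with $\log\Lambda:=\E{\dg\log\sqrt{\dg-1}}/\E{\dg}$, and its random‑walk counterpart — as the real content, and to deduce the cleaner bounds $2\Lambda\ge 2\sqrt{\E{\dg}-1}$ and its analogue from them by soft convexity, which I would dispatch first. Since $(T,\rt)$ has no leaves, $\dg\ge 2$ a.s., and $t\mapsto t\log(t-1)$ is convex on $[2,\infty)$ because its second derivative is $(t-2)/(t-1)^2\ge 0$; hence Jensen gives $\E{\dg\log(\dg-1)}\ge\E{\dg}\log(\E{\dg}-1)$, and dividing by $\E{\dg}$, halving and exponentiating yields $\exp\{\E{\dg\log\sqrt{\dg-1}}/\E{\dg}\}\ge\sqrt{\E{\dg}-1}$. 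For the random walk I would write $\E{\dg\log(\sqrt{\dg-1}/\dg)}/\E{\dg}=\E{\dg\log\sqrt{\dg-1}}/\E{\dg}-\E{\dg\log\dg}/\E{\dg}$, bound the first term as above, and note that, $\log$ being concave, Jensen under the degree‑size‑biased law (a vertex of degree $d$ weighted proportionally to $d$) gives $\E{\dg\log\dg}/\E{\dg}\le\log(\E{\dg^2}/\E{\dg})$, so $\exp\{-\E{\dg\log\dg}/\E{\dg}\}\ge\E{\dg}/\E{\dg^2}$; multiplying the two estimates gives the last displayed inequality.

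For the main bound, since $\rho(T)=\lim_k\E{|W_{2k}(T,\rt)|}^{1/2k}$ by definition, it suffices to show the expected number of closed walks of length $2k$ from the root is at least $(2\Lambda-o(1))^{2k}$. I would get this from a Rayleigh‑type bound valid in the unimodular setting: for $\phi=\phi_{(T,\rt)}\ge 0$ in $\ell^2(V(T))$, chosen measurably in $(T,\rt)$ with $\E{\|\phi\|^2}<\infty$, combining the spectral calculus with the Mass‑Transport Principle gives $\rho(T)\ge\E{\langle\phi,A_T\,\phi\rangle}/\E{\|\phi\|^2}$, where $\langle\phi,A_T\,\phi\rangle=\sum_{v\sim v'}\phi(v)\phi(v')$. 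I would take $\phi$ to be the $R$‑truncation of the positive function on $T$ that plays the role of the approximate eigenvector $(d-1)^{-\mathrm{dist}(\rt,\cdot)/2}$ which realises $2\sqrt{d-1}$ on the $d$‑regular tree — a degree‑adapted (more precisely, subtree‑adapted) truncated Martin kernel at the putative spectral value. Expanding the numerator and denominator as sums of products of degree factors along geodesics emanating from $\rt$, I would use the Mass‑Transport Principle to reverse those geodesics; the effect is to replace "a sum over the endpoints at distance $r$, weighted by the product of inverse degree factors along the way" by an expectation in which a vertex of degree $d$ is encountered with frequency proportional to $d$. That size‑biasing is precisely the origin of the weight $\dg/\E{\dg}$ appearing in $\Lambda$, and the logarithm in $\Lambda$ would come out at the final step, where a product of degree factors along a path is estimated by the exponential of the average of their logarithms. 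Letting $R\to\infty$ should then give $\rho(T)\ge 2\exp\{\E{\dg\log\sqrt{\dg-1}}/\E{\dg}\}$.

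One could also argue combinatorially: closed walks of length $2k$ from $\rt$ in a tree correspond to plane trees with $k$ edges together with a non‑backtracking realisation in $T$; the Catalan count of the plane trees supplies the factor $4^k$, the realisations supply the degree factors along the explored paths, and one evaluates the expectation of the latter by the same Mass‑Transport/size‑biasing argument. For the simple random walk I would run the same argument with the adjacency operator replaced by the Markov operator — equivalently, weighting each step of a closed walk by the reciprocal of the current degree — so that the test function picks up an extra $\deg^{-1}$ per geodesic edge and the size‑biased average that emerges is $\exp\{\E{\dg\log(\sqrt{\dg-1}/\dg)}/\E{\dg}\}$; deterministic boundedness of the degree is used here to keep the per‑step weights $\deg^{-1}$ bounded below, so the truncated test function stays in $\ell^2$ uniformly and the limit $R\to\infty$ is legitimate.

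The step I expect to be the main obstacle is getting the constant $2$ honestly. Every crude version of the above — counting only closed walks that traverse each edge of a finite subtree exactly twice, or bounding $\rho(T)$ by the top eigenvalue of a bounded‑radius ball, or using the naive product test function $\prod(\deg-1)^{-1/2}$ — loses a multiplicative factor that tends to $1$ only as the degrees grow, and hence fails to reach $2\Lambda$ for bounded‑degree trees (where, e.g., $2\Lambda=2\sqrt{d-1}$ is exact for the $d$‑regular tree). So one genuinely has to capture the "$2\sqrt{d-1}$"‑type behaviour — the full Catalan growth, respectively the correct approximate eigenvector, rather than a convenient substitute — and propagate it through the Mass‑Transport averaging without loss; this is where the real work lies, including the choice of the right test function so that the averaged Rayleigh quotient comes out to exactly $2\Lambda$. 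A secondary technical point, present only for the adjacency bound, is that the hypothesis is merely $\E{\dg}<\infty$: without a uniform degree bound one must still control the truncated test function and the sums over geodesics, which should follow from integrability of the contribution of high‑degree vertices to the size‑biased averages above.
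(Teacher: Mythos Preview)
Your treatment of the two final convexity inequalities is correct and matches the paper exactly. The gap is in the main bound: you identify the difficulty (getting the constant $2$ honestly) but do not resolve it, and neither of your two sketches does so as stated.

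Your combinatorial sketch is actually the right framework and is essentially what the paper does, but it is missing the one step that makes it work. Decomposing closed walks by their Dyck height profile is correct, and the Catalan count does supply the factor $4^{k}$. What you call ``the realisations supply the degree factors along the explored paths, and one evaluates the expectation of the latter by the same Mass-Transport/size-biasing argument'' is where the argument is incomplete: for a fixed height profile $h$ and first edge $(\rt,\rt')$, the number $N(h)$ of realising walks is \emph{not} a product of degree factors along a single path --- the branching choices at different forward times occur at vertices that depend on the walk so far --- so there is nothing to which MTP can be applied directly. The paper's device is to insert an entropy step: the non-backtracking walk, run according to the forward times of $h$, defines a probability distribution on the set of realisations, and the Gibbs/divergence inequality gives $\log N(h)\ge H[\text{NBW process realising }h]$. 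The chain rule then reduces this entropy to a sum of conditional entropies of single NBW steps, each equal to $\log(\deg(f^{+})-1)$ at the current edge. Only now does unimodularity enter: the NBW is stationary for the edge-biased law derived from $(T,\rt)$, so after averaging over $(T,\rt,\rt')$ every forward step contributes exactly $\E{\dg\log(\dg-1)}/\E{\dg}$. This entropy-then-stationarity combination is the missing idea; without it the ``degree factors along the explored paths'' cannot be disentangled from the walk and averaged. The same scheme with edge weights $c(G,x,y)=1/\deg_G(x)$ handles the SRW case, and the bounded-degree hypothesis is used exactly where you guessed, to keep these weights bounded below.

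Your Rayleigh-quotient approach may well be viable, but as you acknowledge, the test function that would produce $2\Lambda$ on the nose is not identified, and the naive product $\prod(\deg-1)^{-1/2}$ does not do it when degrees vary. The paper sidesteps this entirely by working on the walk-count side rather than the operator side.
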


The following theorem is about the spectrum of the adjacency operator of unimodular networks and finite graphs.
\begin{theorem} \label{thm:spec}
I) Unimodular networks: Let $(G_n,\rt)$ be a sequence of unimodular networks such that $(G_n,\rt) \to \umn$ locally.
Suppose that $\rho(G) < \infty$. Let $(T_G,\rt)$ be the universal cover of $\umn$. Let $\mu_n$ denote the spectral measure of
$(G_n,\rt)$ and let $\mu_{T_G}$ denote it for $(T_G,\rt)$.

For every $\eps > 0$, there is a constant $c(\eps, \rho(G), \rho(T_G)) > 0$ such that
$$\liminf_{n \to \infty} \, \mu_n \left (\{|x| > \rho(T_G) - \eps\} \right) \geq c(\eps, \rho(G), \rho(T_G)).$$

II) Finite graphs: Let $G_n$ be a sequence of finite, connected graphs with vertex
degrees bounded by $\Delta$ and $|G_n| \to \infty$. Let $\sigma_j(G_n)$ be the
$j$-th largest eigenvalue in absolute value of the adjacency matrix of $G_n$, counted with multiplicity;
these are the singular values of $G_n$. Let $d_{\mathrm{av}}(G_n)$ denote the average degree of $G_n$.

For every $j \geq 1$,
$$ \liminf_{n \to \infty} \, \sigma_j(G_n) \geq \liminf_{n \to \infty} \, 2 \sqrt{d_{\mathrm{av}}(G_n) -1}\,.$$
\end{theorem}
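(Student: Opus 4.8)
The plan is to prove Part I by a moment‑comparison argument and then bootstrap Part II from it via local weak convergence, Cauchy interlacing, and Theorem~\ref{thm:cover}. For Part I, first note that $\sup_n\rho(G_n)=\Delta<\infty$ forces uniformly bounded degrees, since a degree‑$D$ vertex of $G_n$ spans a star subgraph, so $\rho(G_n)\ge\sqrt{D}$ and hence $D\le\Delta^2$. Consequently each $(H,v)\mapsto|W_k(H,v)|$ is bounded and continuous on the relevant part of $\G$, so $(G_n,\rt)\to\umn$ gives $\E{|W_k(G_n,\rt)|}\to\E{|W_k(G,\rt)|}$ for every $k$; that is, each moment of $\mu_n$ converges to the corresponding moment of the spectral measure $\mu$ of $\umn$. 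Since $\rho(G_n)\le\Delta$, every $\mu_n$ (and hence also $\mu$) is supported in $[-\Delta,\Delta]$, so the method of moments gives $\mu_n\to\mu$ weakly. Finally, by unique path lifting the cover map injects $W_{2k}(T_G,\hat{\rt})$ into $W_{2k}(G,\rt)$, so $\int x^{2k}\,d\mu\ge\int x^{2k}\,d\mu_{T_G}$ for every $k$.

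Next, fix $\eps\in(0,\rho(T_G))$ — the statement is vacuous when $\eps\ge\rho(T_G)$ — and set $\delta:=\mu_{T_G}(\{|x|>\rho(T_G)-\eps/2\})$, which is positive since $\rho(T_G)$ or $-\rho(T_G)$ lies in $\mathrm{supp}(\mu_{T_G})$. Then $\int x^{2k}\,d\mu\ge\int x^{2k}\,d\mu_{T_G}\ge\delta\,(\rho(T_G)-\eps/2)^{2k}$, while splitting the integral at $|x|=\rho(T_G)-\eps$ and using $\mathrm{supp}(\mu)\subseteq[-\Delta,\Delta]$ yields
\[ \int x^{2k}\,d\mu \;\le\; (\rho(T_G)-\eps)^{2k} + \Delta^{2k}\,\mu(\{|x|>\rho(T_G)-\eps\}). \]
Choosing $k=k(\eps,\delta,\rho(T_G))$ so large that $\delta\,(\rho(T_G)-\eps/2)^{2k}\ge 2\,(\rho(T_G)-\eps)^{2k}$ (possible since $0\le\rho(T_G)-\eps<\rho(T_G)-\eps/2$) gives $\mu(\{|x|>\rho(T_G)-\eps\})\ge\tfrac12\delta\,\Delta^{-2k}(\rho(T_G)-\eps/2)^{2k}=:c(\eps,\Delta,\mu_{T_G})>0$. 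Since $\{|x|>\rho(T_G)-\eps\}$ is open and $\mu_n\to\mu$ weakly, $\liminf_n\mu_n(\{|x|>\rho(T_G)-\eps\})\ge c$, which is Part I.

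For Part II, note that since balls of fixed radius in $G_n$ contain boundedly many vertices, $\mathrm{diam}(G_n)\to\infty$, and a geodesic attaining the diameter is an induced path $P_{m_n}\subseteq G_n$ with $m_n\to\infty$; as the $j$ largest eigenvalues of $P_{m_n}$ tend to $2$, Cauchy interlacing for this induced subgraph gives $\sigma_j(G_n)>2-\eps$ for all large $n$, so $\liminf_n\sigma_j(G_n)\ge 2$. This settles the case $\liminf_n 2\sqrt{d_{\mathrm{av}}(G_n)-1}\le 2$, so assume $d^\star:=\liminf_n d_{\mathrm{av}}(G_n)>2$ and suppose for contradiction that $\liminf_n\sigma_j(G_n)<L:=2\sqrt{d^\star-1}$. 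Passing to a subsequence, arrange that $\sigma_j(G_n)\to s^\star<L$, that $d_{\mathrm{av}}(G_n)\to d'\ge d^\star>2$, and (by compactness of the laws of bounded‑degree rooted graphs) that the $2$-core $C_n$ of $G_n$, rooted at a uniform random vertex, converges locally weakly to a unimodular network $(C,\rt)$. For large $n$ then $G_n$ is not a tree, so $C_n$ is connected with minimum degree $\ge 2$ and is obtained from $G_n$ by deleting leaves one at a time, each deletion removing one vertex and one edge; hence $d_{\mathrm{av}}(C_n)\ge d_{\mathrm{av}}(G_n)$ and $|C_n|\ge\tfrac{2}{\Delta}\big(|E(G_n)|-|V(G_n)|\big)\to\infty$. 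A mass‑transport argument applied to $f(H,u,w)=\ind{\mathrm{deg}_H(w)<2}$, together with $\pr{\mathrm{deg}_C(\rt)<2}=\lim_n\pr{\mathrm{deg}_{C_n}(\rt)<2}=0$, shows that $C$ has minimum degree $\ge 2$ almost surely, so its universal cover $(T_C,\hat{\rt})$ is a leafless unimodular tree with $\E{\mathrm{deg}_{T_C}(\hat{\rt})}=\E{\mathrm{deg}_C(\rt)}=\lim_n d_{\mathrm{av}}(C_n)\ge d'$; Theorem~\ref{thm:cover} then gives $\rho(T_C)\ge 2\sqrt{\E{\mathrm{deg}_C(\rt)}-1}\ge L$. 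Now apply Part I to the sequence $(C_n,\rt)$, which is legitimate since $\rho(C_n)\le\Delta$: for each $\eps>0$ the empirical eigenvalue measure of $C_n$ eventually puts mass at least $c(\eps,\Delta,\mu_{T_C})>0$ on $\{|x|>\rho(T_C)-\eps\}$, so as $|C_n|\to\infty$ the graph $C_n$ eventually has at least $j$ eigenvalues of absolute value exceeding $L-\eps$, and Cauchy interlacing for $C_n\subseteq G_n$ shows $G_n$ has at least as many such eigenvalues, giving $\sigma_j(G_n)>L-\eps$ eventually. Thus $\liminf_n\sigma_j(G_n)\ge L$ along the subsequence, contradicting $s^\star<L$.

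I expect the main obstacle to be Part I's passage from the moment domination $\int x^{2k}\,d\mu\ge\int x^{2k}\,d\mu_{T_G}$ to a \emph{quantitative} lower bound on the mass of $\mu$ near $\rho(T_G)$, which is exactly what forces the explicit choice of $k$ above. The other point needing care is that Theorem~\ref{thm:cover} applies only to leafless trees: this is why Part II treats the regime $\liminf_n d_{\mathrm{av}}(G_n)=2$ separately via the induced path, and for $\liminf_n d_{\mathrm{av}}(G_n)>2$ first passes to the $2$-core, whose local weak limit is leafless and whose expected root degree is no smaller than $\liminf_n d_{\mathrm{av}}(G_n)$.
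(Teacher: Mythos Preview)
Your argument is correct and follows essentially the same strategy as the paper: Part~I is a moment comparison between the spectral measure and that of the universal cover, and Part~II reduces to Part~I by passing to the $2$-core and invoking Theorem~\ref{thm:cover} together with Cauchy interlacing.

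The one genuine difference is how you dispose of the degenerate regime in Part~II. The paper splits into two cases according to whether $|G_n^{\mathrm{core}}|\to\infty$ along a subsequence; when the cores stay bounded it argues that the local weak limit is an infinite unimodular tree of expected degree~$2$, whence $\rho\ge 2$ via Catalan numbers, and then applies Part~I to the tree. You instead show directly that $\liminf_n\sigma_j(G_n)\ge 2$ for \emph{every} such sequence, by observing that bounded degree and $|G_n|\to\infty$ force $\mathrm{diam}(G_n)\to\infty$, extracting a long induced geodesic path, and interlacing its eigenvalues (which accumulate at~$2$) into $G_n$. This cleanly settles the case $\liminf_n d_{\mathrm{av}}(G_n)\le 2$ without any limit object, and in the remaining case $\liminf_n d_{\mathrm{av}}(G_n)>2$ the cores automatically grow, so you avoid the paper's case split entirely. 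Your route is a bit more elementary here; the paper's route has the advantage of staying entirely within the local-weak-limit framework. In Part~I the only cosmetic difference is that you pass to the limit measure $\mu$ first and then apply the portmanteau inequality on the open set $\{|x|>\rho(T_G)-\eps\}$, whereas the paper applies the moment bound to each $\mu_n$ and then lets $n\to\infty$; both yield the same constant up to bookkeeping.

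Two minor remarks: your mass-transport function $f(H,u,w)=\ind{\deg_H(w)<2}$ does not literally do the job (both sides of the MTP can be infinite); the standard choice $f(H,u,w)=\ind{u\sim w,\;\deg_H(w)<2}$, or simply the ``everything shows at the root'' principle, gives that $\pr{\deg_C(\circ)<2}=0$ implies $C$ has no leaves a.s. Also, the bounded-degree deduction at the start of Part~I is correct but unnecessary, since the paper already records (Section~\ref{sec:pre}) that $\sup_n\rho(G_n)<\infty$ alone gives convergence of all moments $\E{|W_k(G_n,\rt)|}\to\E{|W_k(G,\rt)|}$.
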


The next theorem is about the spectrum of simple random walk.
For a finite and connected graph $G$, its 2-core, denoted $G^{\mathrm{core}}$, is the subgraph obtained by
iteratively removing leaves from $G$ until none remains. So if $G$ has no leaves then $G^{\mathrm{core}}$ equals $G$,
whereas if $G$ is a tree then $G^{\mathrm{core}}$ is empty.

\begin{theorem} \label{thm:srwspec}
Let $G_n$ be a sequence of finite and connected graphs with all vertex degrees at most $\Delta$.
Let $\mu^{\mathrm{SRW}}_{G_n}$ denote the empirical measure of the eigenvalues of the Markov
operator of $G_n$, that is, of the matrix $P$ with entries $P(x,y) = \frac{1}{\deg{x}} \ind{x \sim y}$ for $x,y \in V(G_n)$.

Suppose $|G_n| \to \infty$ and $|G_n^{\mathrm{core}}| / |G_n| \to 1$. Then for every $\eps > 0$,
$$ \liminf_{n \to \infty} \, \mu^{\mathrm{SRW}}_{G_n} \left ( \left \{ |x| >
\frac{2 \sqrt{d_{\rm{av}}(G_n)-1}}{\frac{1}{2 |E(G_n)|} \sum_{x \in G_n} (\deg{x})^2} - \eps \right \} \right) > 0.$$
\end{theorem}
Note that $\frac{1}{2 |E(G_n)|} \sum_{x \in G} (\deg{x})^2$ is the average degree of $G$
with respect to the stationary measure of its simple random walk, which assigns probability $\frac{\deg{x}}{2|E(G)|}$
to a vertex $x$.

The final theorem is about volume growth.
\begin{theorem} \label{thm:vol}
Let $(T,\rt)$ be a unimodular tree with $\E{\dg} < \infty$ and with no leaves almost surely.
Let $S_r(T,\rt) =  \{x \in V(T): \mathrm{dist}_T(o,x) = r\}$. Then,
\begin{align*}
\E{ |S_r(T,\rt)|} &\geq \E{\dg} \cdot \exp \left \{(r-1) \frac{\E{\dg \, \log (\dg -1)}}{\E{\dg}} \right \}\\
&\geq \E{\dg} (\E{\dg}-1)^{r-1}.
\end{align*}
\end{theorem}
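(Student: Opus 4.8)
The plan is to recast the volume as a count of geodesics from the root, pass to the degree-biased version of the network, and run an induction on $r$ whose one-step gain is exactly the exponent in the bound; the heart of the matter will be a cancellation forced by the Mass-Transport Principle. Work on an almost sure sample $(T,\rt)$, so that $T$ is an infinite tree with all degrees at least $2$. For a vertex $x$ write $N_r(x)=|S_r(T,x)|$; in a tree this equals the number of length-$r$ geodesics from $x$, equivalently the number of non-backtracking walks of length $r$ from $x$, and since each vertex of $S_r$ has a neighbour in $S_{r+1}$ it is nondecreasing in $r$. For neighbours $a\sim b$ let $L_s(a\to b)$ be the number of non-backtracking walks of length $s$ from $b$ whose first step is not to $a$; then $N_r(x)=\sum_{y\sim x}L_{r-1}(x\to y)$ and $L_s(a\to b)=\sum_{c\sim b,\,c\neq a}L_{s-1}(b\to c)$ with $L_0\equiv1$, so $L_1(a\to b)=\deg_T(b)-1$. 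Since $T$ has no leaves every such count is $\geq1$, so all logarithms below are nonnegative and (\ref{eqn:MTP}) applies to them. Let $\hat{\mathbb P}$ have density $\dg/\E{\dg}$ with respect to the law of $(T,\rt)$ and set $\hat{\E}[X]=\E{\dg\,X}/\E{\dg}$, so that $\hat{\E}[\log(\dg-1)]=\E{\dg\log(\dg-1)}/\E{\dg}$.

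Because $\E{N_r(\rt)}=\E{\dg}\,\hat{\E}[N_r(\rt)/\dg]$ with $N_r(\rt)/\dg\geq1$ (by monotonicity, $N_r\geq N_1=\dg$), Jensen's inequality for $\log$ under $\hat{\mathbb P}$ gives $\log\E{N_r(\rt)}\geq\log\E{\dg}+\hat{\E}[\log(N_r(\rt)/\dg)]$, so the first inequality of the theorem reduces to proving $\hat{\E}[\log N_r(\rt)]-\hat{\E}[\log\dg]\geq(r-1)\hat{\E}[\log(\dg-1)]$ for all $r\geq1$. I would prove this by induction using the functional $\Phi_r:=\E{\dg}^{-1}\,\E{\sum_{y\sim\rt}\log L_r(\rt\to y)}\in[0,\infty]$. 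The arithmetic--geometric mean inequality applied to $N_{r+1}(\rt)=\sum_{y\sim\rt}L_r(\rt\to y)$ gives $\log N_{r+1}(\rt)\geq\log\dg+\dg^{-1}\sum_{y\sim\rt}\log L_r(\rt\to y)$; multiplying by $\dg$, taking $\E{\cdot}$ and dividing by $\E{\dg}$ yields $\hat{\E}[\log N_{r+1}(\rt)]\geq\hat{\E}[\log\dg]+\Phi_r$, so it suffices to show $\Phi_r\geq r\,\hat{\E}[\log(\dg-1)]$. For $r=1$ this is an equality: $L_1(\rt\to y)=\deg_T(y)-1$, and (\ref{eqn:MTP}) applied to $(T,\rt,y)\mapsto\ind{y\sim\rt}\log(\deg_T(y)-1)$ rewrites $\E{\sum_{y\sim\rt}\log(\deg_T(y)-1)}$ as $\E{\dg\log(\dg-1)}$, i.e.\ $\Phi_1=\hat{\E}[\log(\dg-1)]$. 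For the step, the arithmetic--geometric mean inequality applied to $L_{r+1}(\rt\to y)=\sum_{z\sim y,\,z\neq\rt}L_r(y\to z)$ gives $\log L_{r+1}(\rt\to y)\geq\log(\deg_T(y)-1)+(\deg_T(y)-1)^{-1}\sum_{z\sim y,\,z\neq\rt}\log L_r(y\to z)$; summing over $y\sim\rt$, taking $\E{\cdot}$ and dividing by $\E{\dg}$ decomposes $\Phi_{r+1}$ as $\hat{\E}[\log(\dg-1)]+\Psi_r$, where the first term is again produced by (\ref{eqn:MTP}) and $\Psi_r:=\E{\dg}^{-1}\,\E{\sum_{y\sim\rt}(\deg_T(y)-1)^{-1}\sum_{z\sim y,\,z\neq\rt}\log L_r(y\to z)}$.

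It then remains to identify $\Psi_r=\Phi_r$, which is where I expect the main difficulty to lie and where unimodularity enters. Applying (\ref{eqn:MTP}) to $f(T,\rt,y):=\ind{y\sim\rt}(\deg_T(y)-1)^{-1}\sum_{z\sim y,\,z\neq\rt}\log L_r(y\to z)$, for which $f(T,y,\rt)=\ind{y\sim\rt}(\dg-1)^{-1}\sum_{z\sim\rt,\,z\neq y}\log L_r(\rt\to z)$, gives
\[ \E{\sum_{y\sim\rt}\frac{1}{\deg_T(y)-1}\sum_{z\sim y,\,z\neq\rt}\log L_r(y\to z)}\;=\;\E{\frac{1}{\dg-1}\sum_{y\sim\rt}\ \sum_{z\sim\rt,\,z\neq y}\log L_r(\rt\to z)}\;=\;\E{\sum_{z\sim\rt}\log L_r(\rt\to z)}, \]
since in the middle expression each neighbour $z$ of $\rt$ is counted $\dg-1$ times. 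Dividing by $\E{\dg}$ yields $\Psi_r=\Phi_r$, hence $\Phi_{r+1}\geq\hat{\E}[\log(\dg-1)]+\Phi_r$, and the induction closes. The second inequality of the theorem is then immediate: $d\mapsto d\log(d-1)$ has second derivative $(d-2)/(d-1)^2\geq0$ on $[2,\infty)$ and $\dg\geq2$ almost surely, so by Jensen's inequality $\E{\dg\log(\dg-1)}\geq\E{\dg}\log(\E{\dg}-1)$, giving the clean lower bound $\E{\dg}(\E{\dg}-1)^{r-1}$.

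Two things must be arranged correctly, neither of which is a computation. First, one cannot bound $\E{\log N_r(\rt)}$ directly — already at $r=1$ that would require the false inequality $\E{\log\dg}\geq\log\E{\dg}$ — so one must work under $\hat{\mathbb P}$ and invoke Jensen for $\log$ only at the very end. Second, the auxiliary functional $\Phi_r$ must be chosen so that its one-step increment is exactly $\hat{\E}[\log(\dg-1)]$, and it is precisely the Mass-Transport identity $\Psi_r=\Phi_r$ (a double bookkeeping over the two edges $\rt\to y$ and $y\to z$ that is invisible without it) that makes this work. Finally, if $\E{\dg\log(\dg-1)}=\infty$ the asserted bound is $+\infty$; this degenerate case is handled separately, for instance by noting that $N_r$ is nondecreasing in $r$ and $\E{N_2(\rt)}=\E{\dg^2}-\E{\dg}=\infty$.
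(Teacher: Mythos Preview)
Your proof is correct and is, at its core, the same argument as the paper's, repackaged without the entropy language. The paper also passes to the edge-rooted (degree-biased) measure, applies Jensen once to reduce to a lower bound on $\mathbb{E}[\log|S_r(T,\rt,\rt')|]$ --- this is exactly your $\Phi_{r-1}$ --- and then shows that quantity is at least $(r-1)\,\hat{\mathbb E}[\log(\dg-1)]$. Where you iterate AM--GM and prove the identity $\Psi_r=\Phi_r$ by a direct mass-transport computation, the paper instead bounds $\log|S_r(T,\rt,\rt')|$ from below by the entropy of the non-backtracking walk, uses the chain rule to split that entropy into one-step conditional entropies, and then invokes stationarity of the NBW (the paper's Lemma~\ref{lem:nbw}) to see that every step contributes $\hat{\mathbb E}[\log(\dg-1)]$. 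Your identity $\Psi_r=\Phi_r$ is precisely that stationarity lemma applied to the function $\log L_r$, and your AM--GM step is the one-step entropy increment written out explicitly. So the two routes differ only in presentation: the paper's is more conceptual and reuses machinery already set up for Theorem~\ref{thm:cover}, while yours is slightly more self-contained in that it avoids entropy notions and Lemma~\ref{lem:nbw} altogether.
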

The lower bound of $\E{\dg} (\E{\dg}-1)^{r-1}$ follows from Jensen's inequality
applied to the convex function $x\log(x-1)$ for $x \geq 2$. Here is a consequence of Theorem \ref{thm:vol};
see \cite{AFH} for a related result on finite graphs.
Let $\umn$ be a unimodular network with no leaves almost surely and $\E{\dg} < \infty$.
Let $NBW_r(G,\rt)$ be the set of non-backtracking walks of length $r$ from the root.
Then $NBW_r(G,\rt)$ is in bijection with $S_r(T_G,\rt)$, hence,
\begin{equation} \label{eqn:nbw}
\E{|NBW_r(G,\rt)|} \geq \E{\dg}  (\E{\dg}-1)^{r-1}.
\end{equation}

\subsection{Outline of the paper}
In Section \ref{sec:pre} we discuss some concepts used in the proofs.
Theorem \ref{thm:cover} is proved in Section \ref{sec:thm1}.
Theorems \ref{thm:spec} and \ref{thm:srwspec} and \ref{thm:vol} are proved in Section \ref{sec:thm2}.
The proof of Theorem \ref{thm:cover} is based on entropy of the non-backtracking walk.
The proofs of Theorems \ref{thm:spec} and \ref{thm:srwspec} make crucial use of the local convergence topology along with spectral methods.

\subsection*{Acknowledgements}
I thank Mikl\'{o}s Ab\'{e}rt and P\'{e}ter Csikv\'{a}ri for helpful comments.

\section{Preliminaries} \label{sec:pre}

\subsection{Spectrum of a unimodular network}

For a unimodular network $\umn$ the quantity $\E{|W_k(G,\rt)|}$ is
the $k$-th moment of a Borel probability measure $\mu_G$ on $\mathbb{R}$,
called its \emph{spectral measure}. Usually, the theory of von Neumann Algebras
is used to define $\mu_G$ is general (see \cite[Section 2.3]{Bor} or \cite[Section 5]{AL}).
One has that
$$\mu_G(B) = \mathbb{E}_{\umn} \left [\mu^{\delta_{\rt}}_{A_G}(B)\right],$$
where $\mu^{\delta_{\rt}}_{A_G}$ is the spectral measure at the function $\delta_{\rt}$
of the adjacency operator of $G$ acting on $\ell^2(G)$.

The spectral radius of $\umn$ can also be formulated in terms of the spectral
measure: $\rho(G) = \sup \,\{|x|: x \in \mathrm{support}(\mu_G)\}.$ The spectral
measure $\mu^{\mathrm{SRW}}_G$ and spectral radius $\rho_{\mathrm{SRW}}(G)$ of the SRW on
$\umn$ are defined similarly with respect to the Markov operator acting on $\ell^2(G)$.
The probability measure $\mu^{\mathrm{SRW}}_{G}$ is supported inside the interval
$[-1,1]$; thus, $\rho_{\mathrm{SRW}}(G) \leq 1$. Moreover, its moments are
$$\int x^k \, d \mu^{\mathrm{SRW}}_G = \E{p_{k}(G,\rt)}.$$

If a sequence of unimodular networks $(G_n,\rt)$ converges to $(G,\rt)$ locally
then their spectral measures $\mu_{G_n}$ converge to $\mu_G$ weakly \cite[Proposition 2.2]{Bor}.
Similarly, $\mu^{\mathrm{SRW}}_{G_n} \to \mu^{\mathrm{SRW}}_{G}$ weakly.

\subsection{Edge rooted graphs and non-backtracking walk}
The non-backtracking walk (NBW) is a Markov process on the space of directed, edge rooted
graphs with no leaves. It does exactly as it sounds.

Define $\mathcal{G}^{**}$ to be set of isomorphism classes of doubly
rooted graphs $(G,x,y)$ analogous to $\mathcal{G}^{*}$. Now for $(G,x,y) \in \mathcal{G}^{**}$
with $(x,y) \in E(G)$, let $e = (x,y), e^{-} = x, e^{+} = y$ and $\cev{e} = (y,x)$. One step of the
non-backtracking walk gives a random element $(G,e^{+},z) \in \mathcal{G}^{**}$, where $z$ is a
uniform random neighbor of $e^{+}$ that is different from $e^{-}$. Let $\nb{G, e}$ denote the
outcome of one step of the NBW starting from $(G,e) = (G,x,y)$. Thus,
\begin{equation*}
\pr{ \nb{G,e} = (H,f)} = \begin{cases}
\frac{1}{\deg(e^{+})-1} & \text{if}\; (H,f) = (G,e^{+},z) \;\text{for}\; z \in B_1(G,e^{+}) \setminus \{e^{-}\} \\
0 & \text{otherwise}.
\end{cases}
\end{equation*}

The NBW on a unimodular network $\umn$ with $\E{\dg} < \infty$ and no leaves almost surely is as follows. 
First, given $\umn$, the random edge rooted network $(G,\rt,\rt')$ \emph{derived from} $\umn$ has the following law.
For every bounded measurable $f : \mathcal{G}^{**} \to \mathbb{R}$,
\begin{equation} \label{eqn:edgerooted}
\E{ f(G,\rt,\rt')} = \frac{\E{\,\sum_{x: x \sim \rt} f(G,\rt,x)}}{\E{\dg}}\,.\end{equation}
The NBW on $\umn$ is the $\mathcal{G}^{**}$-valued process $(G_0,e_0), (G_1,e_1), \ldots$
defined by $(G_0,e_0) = (G,\rt,\rt')$ and $(G_n,e_n) = \nb{G_{n-1},e_{n-1}}$.


The network $(G,\rt,\rt')$ can roughly be thought of as choosing the root of $G$
according to a degree bias from the distribution of $(G,\rt)$, and then
choosing $\rt'$ as a uniform random neighbour of $\rt$. If $(G,\rt)$ is a fixed finite
graph with a uniform random root $\rt$ then $(G,\rt,\rt')$ is rooted at a uniform
random directed edge of $G$.

Also, for a random edge rooted network $(G,e) = (G,e^{-},e^{+}) \in \mathcal{G}^{**}$, we define
its \emph{reversal} $(G, \cev{e})$ as the random edge rooted network whose law satisfies the
following for all bounded measurable $f : \mathcal{G}^{**} \to \mathbb{R}$:
$$\E{f(G,\cev{e})} = \mathbb{E}_{(G,e)} \left[ f(G,e^{+},e^{-})\right].$$

\begin{lemma}[Stationarity of NBW] \label{lem:nbw}
Let $\umn$ be a unimodular network with no leaves almost surely and
satisfying $\E{\dg} < \infty$. Let $(G_0,e_0), (G_1,e_1), \ldots$ be the NBW
on $\umn$. Then the reversal $(G,\cev{e}_0)$ has the same law as $(G,e_0)$, and
each $(G_n,e_n)$ has the same law as $(G_0,e_0)$.
\end{lemma}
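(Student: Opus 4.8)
The plan is to prove both assertions by checking the defining identities against bounded measurable test functions $f:\mathcal{G}^{**}\to\mathbb{R}$, reducing everything to the Mass-Transport Principle \eqref{eqn:MTP} for $\umn$. For the reversal statement, I would start from the definition $\E{f(G,\cev{e}_0)}=\mathbb{E}_{(G,e_0)}[f(G,e_0^+,e_0^-)]$ and unwind the law of $(G,\rt,\rt')$ via \eqref{eqn:edgerooted}: this gives
\[
\E{f(G,\cev{e}_0)}=\frac{1}{\E{\dg}}\,\E{\sum_{x:\,x\sim\rt} f(G,x,\rt)}.
\]
Now I want to recognize the right-hand side as $\E{f(G,\rt,\rt')}=\frac{1}{\E{\dg}}\E{\sum_{x:\,x\sim\rt} f(G,\rt,x)}$. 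The two differ exactly by swapping the roles of the two roots, so this is precisely an application of \eqref{eqn:MTP} to the function $g(G,u,v)=f(G,v,u)\,\ind{u\sim v}$, which is supported on neighbours and hence covered by \cite[Proposition 2.2]{AL}. This yields $\E{\sum_x f(G,x,\rt)}=\E{\sum_x f(G,\rt,x)}$ (with the sums over neighbours of the first root), giving the reversal claim.

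For the stationarity of the marginal law along the walk, by induction it suffices to show that one NBW step preserves the law of $(G_0,e_0)$, i.e. that $(G_1,e_1)=\nb{G_0,e_0}$ has the same law as $(G_0,e_0)$. I would compute, for bounded measurable $f$,
\[
\E{f(G_1,e_1)}=\mathbb{E}_{(G,e_0)}\!\left[\frac{1}{\deg(e_0^+)-1}\sum_{z\in B_1(G,e_0^+)\setminus\{e_0^-\}} f(G,e_0^+,z)\right],
\]
then expand the outer expectation using \eqref{eqn:edgerooted} with $e_0=(\rt,\rt')$. This produces a double sum: over neighbours $y$ of $\rt$ (playing the role of $e_0^+$) and over neighbours $z\neq \rt$ of $y$, weighted by $\frac{1}{\deg(y)-1}$, all divided by $\E{\dg}$. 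The key manoeuvre is to set up the correct mass-transport function. Define, for an edge $(u,v)\in E(G)$,
\[
h(G,u,v)=\frac{1}{\deg(u)-1}\sum_{z\in B_1(G,u)\setminus\{v\}} f(G,u,z),
\]
which depends only on $(G,u)$ and the excluded neighbour $v$, and is supported on neighbours. Then $\E{f(G_1,e_1)}=\frac{1}{\E{\dg}}\E{\sum_{y\sim\rt} h(G,y,\rt)}$. Applying \eqref{eqn:MTP} (again via the neighbour-supported criterion) swaps the roots: $\E{\sum_{y\sim\rt} h(G,y,\rt)}=\E{\sum_{y\sim\rt} h(G,\rt,y)}$. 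Unwinding the latter,
\[
\E{\sum_{y\sim\rt} h(G,\rt,y)}=\E{\frac{1}{\deg(\rt)-1}\sum_{y\sim\rt}\ \sum_{z\sim\rt,\,z\neq y} f(G,\rt,z)}=\E{\sum_{z\sim\rt} f(G,\rt,z)},
\]
since for each fixed $z\sim\rt$ the number of $y\sim\rt$ with $y\neq z$ is exactly $\deg(\rt)-1$, so the inner coefficient collapses to $1$. Dividing by $\E{\dg}$ gives $\E{f(G_1,e_1)}=\frac{1}{\E{\dg}}\E{\sum_{z\sim\rt} f(G,\rt,z)}=\E{f(G_0,e_0)}$, as desired.

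The main obstacle — and the only subtle point — is bookkeeping the two nested sums so that the mass-transport function $h$ is legitimately defined on $\mathcal{G}^{**}$, measurable, supported on adjacent pairs, and correctly captures the $\frac{1}{\deg(u)-1}$ weight while "remembering" which neighbour to exclude; once $h$ is correctly identified, the proof is a direct double application of the Mass-Transport Principle together with the elementary counting identity $\#\{y\sim\rt:\,y\neq z\}=\deg(\rt)-1$. I should also note at the outset that the assumption of no leaves ($\deg\geq 2$ a.s.) guarantees $\deg(u)-1\geq 1$ so the NBW step and the function $h$ are well-defined, and $\E{\dg}<\infty$ ensures \eqref{eqn:edgerooted} defines a genuine probability measure; by a monotone-class argument it is enough to verify the identities for bounded measurable (indeed bounded continuous) $f$.
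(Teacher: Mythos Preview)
Your proof is correct and follows the same overall strategy as the paper: both assertions are reduced to the Mass-Transport Principle via the defining identity \eqref{eqn:edgerooted}. The reversal argument is identical to the paper's.

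For the one-step stationarity there is a small but genuine tactical difference worth recording. The paper packages the double sum into a transport function
\[
F(G,y,z)=\sum_{x}\frac{f(G,x,z)}{\deg(x)-1}\,\ind{z\neq y,\;x\sim z,\;x\sim y},
\]
applies the MTP to swap the root with the \emph{terminal} vertex $z$, obtains $\E{\sum_{x\sim\rt} f(G,x,\rt)}$, and then needs a \emph{second} application of unimodularity to turn this into $\E{\sum_{x\sim\rt} f(G,\rt,x)}$. Your choice
\[
h(G,u,v)=\frac{1}{\deg(u)-1}\sum_{z\sim u,\,z\neq v} f(G,u,z)
\]
instead swaps the root with the \emph{middle} vertex $y=e_0^+$; after the swap the counting identity $\#\{y\sim\rt:y\neq z\}=\deg(\rt)-1$ collapses the sum directly to $\sum_{z\sim\rt} f(G,\rt,z)$, so a single MTP suffices. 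Both routes are short, but yours is one step more economical.
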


\begin{proof}
If $f: \mathcal{G}^{**} \to [0,\infty)$ is measurable then
$$ \E{f(G_0,\cev{e}_0)} = \frac{\,\E{ \sum_{x: x \sim \rt} f(G,x,\rt)}}{\E{\dg}} =
\frac{\E{ \,\sum_{x: x \sim \rt} f(G,\rt,x)}}{\E{\dg}} = \E{f(G_0,e_0)},$$
where the second equality uses the mass transport principle \eqref{eqn:MTP}.
This shows that $(G_0, \cev{e}_0)$ has the same law as $(G_0,e_0)$.

For the second claim, it suffices to show that $(G_1,e_1)$ has the same law as $(G_0,e_0)$.
For $f$ as above, we see from the definition of a NBW step that
\begin{align}
	\nonumber \E{f(G_1,e_1)} &= \mathbb{E}_{(G,\rt,\rt')} \left[ \sum_{z \sim \rt', \,z \neq \rt } \frac{f(G,\rt',z)}{\deg(\rt') - 1} \right] \\
	\label{eqn:stat}
	&= \dfrac{\mathbb{E}_{(G,\rt)} \left [ \sum_{x, z \in V(G)}
		\frac{f(G,x,z)}{\deg(x) - 1} \ind{z \neq \rt, z \sim x, x \sim \rt} \right ]}{\E{\dg}}.
\end{align}
The function $F : \mathcal{G}^{**} \to [0, \infty)$ defined by
$$F(G,y,z) = \sum_{x \in V(G)} \frac{f(G,x,z)}{\deg(x) - 1} \ind{z \neq y, x \sim z, x \sim y}$$
is an isomorphism invariant. The mass transport principle applied to it gives
$$\E{ \sum_{z \in V(G)} F(\rt,z)} = \E{ \sum_{z \in V(G)} F(z,\rt)}.$$
The l.h.s.~above is the numerator of (\ref{eqn:stat}). The term inside $\E{\cdot}$ on the r.h.s.~is
\begin{align*}
	\sum_{x, z \in V(G)} \frac{f(G,x,\rt)}{\deg(x) - 1} \ind{z \neq \rt, x \sim \rt, x \sim z} = &
	\sum_{x \in V(G)} \frac{f(G,x,\rt)}{\deg(x) - 1} \sum_{z \in V(G)} \ind{z \neq \rt, x \sim \rt, x \sim z} \\
	& = \sum_{x: x \sim \rt} f(G,x,\rt).
\end{align*}
Therefore, $\E{\dg} \cdot \E{f(G_1,e_1)} =  \E{\sum_{x: x \sim \rt} f(G,x,\rt)} = \E{\sum_{x: x \sim \rt} f(G,\rt,x)}$.
This proves $\E{f(G_1,e_1)} = \E{f(G_0,e_0)}$.
\end{proof}

\subsection{Entropy}
We mention some concepts of Shannon entropy that we will use; for a reference see \cite{CT}.
Let $X$ be a random variable with values in a countable state space $\Omega$. If $p(x)$ is
the probability density of $X$ then the entropy of $X$ is
$$H[X] = \sum_{x \in \Omega} - p(x) \log p(x) = \mathbb{E}_{X} \left [ - \log p(X)\right].$$
Let $(X,Y)$ be jointly distributed on $\Omega^2$ and let $p(y | x)$ be the conditional density
of $Y$ given $\{X = x\}$ ($p(y|x) \equiv 0$ if $p(x) = 0$). The conditional
entropy of $Y$ given $X$ is
$$H [ Y | X] = \mathbb{E}_{X} \left [ \sum_{y \in \Omega} - p(y|X) \log p(y|X) \right].$$
If $H[X,Y]$ and $H[X]$ are both finite then $H[Y|X] = H[X,Y] - H[X]$. If $Y$ is measurable
with respect to $X$ then $H[Y|X] = 0$. If $(X,Y,Z)$ are jointly distributed such that
$Y$ is conditionally independent of $Z$ given $X$ then $H[Y | X, Z] = H[Y | X]$.
If $(X_0, \ldots, X_n)$ are jointly distributed then the chain rule of entropy states
$$H[X_0,\ldots,X_n] = H[X_0] + H[X_1 | X_0] + H[X_2 | X_1,X_0] + \cdots + H[X_n | X_{n-1}, \ldots, X_0].$$

\paragraph{\textbf{Entropy of the NBW step.}}
If $(G,x,y) \in \mathcal{G}^{**}$ is edge rooted without leaves then $H[\nb{G,x,y} \mid (G,x,y)] = \log(\deg(y) -1)$.
This implies that if $(G,\rt,\rt')$ is a random edge rooted graph without leaves, almost surely, then
$H[\nb{G,\rt,\rt'} \mid (G,\rt,\rt')] = \mathbb{E}_{(G,\rt,\rt')}[\log (\deg(\rt') -1)]$. In particular, if $(G,\rt,\rt')$ is derived
from a unimodular network $\umn$ via (\ref{eqn:edgerooted}), then the edge reversal invariance
of $(G,\rt,\rt')$ (Lemma \ref{lem:nbw}) applied to $\mathbb{E}_{(G,\rt,\rt')}[\log (\deg(\rt') -1)]$ gives the entropy of a
NBW step on a unimodular network:
\begin{equation} \label{eqn:NBWentropy}
H[\nb{G,\rt,\rt'} \mid (G,\rt,\rt')] = \mathbb{E}_{(G,\rt,\rt')} \left [ \log (\dg -1)\right] = \frac{\E{\dg \log (\dg -1)}}{\E{\dg}}.
\end{equation}

\section{Spectral radius of unimodular trees} \label{sec:thm1}

In order to prove Theorem \ref{thm:cover} we will consider unimodular networks with
edge weights and bound the expectation of weighted closed walks. By choosing appropriate
weights we will deduce both statements in Theorem \ref{thm:cover}. Let $(T,x) \in \G$ be
a tree. Let $w \in W_{2k}(T,x)$ and let the sequence of vertices visited by $w$ be denoted
$w_0 = x, w_1, \ldots, w_{2k} = x$. Let $e_j = (w_{j-1},w_j)$. The \emph{height profile} of $w$
is the function $h_{w}: \{0,1, \ldots, 2k\} \to \{0,1,2,\ldots\}$ defined by $h_w(j) = \mathrm{dist}_{T}(x, w_j)$.
The height profile is a Dyck path of length $2k$. The \emph{forward steps} of $w$ is the sequence of $k$
directed edges $e_{j_1}, \ldots, e_{j_k}$ for which $h_w(j_i) - h_w(j_i-1) = 1$, and such a $j_i$ is a \emph{forward time}.
The walk $w$ is uniquely determined by its height profile and forward steps.

Let $c : \mathcal{G}^{**} \to [0,\infty)$ be a weight function such that for some $\delta > 0$
if $(G,x,y)$ is rooted at an edge $(x,y)$ then $c(G,x,y) \geq \delta$.
The weighted number of closed walks of length $2k$ in $(T,x)$ is defined as
$$W_{2k}(T,x,c) = \sum_{w \in W_{2k}(T,x)} \, \prod_{i=1}^{2k} \,c(T,e_i).$$
We will write $c(G,x,y)$ as $c(x,y)$ when there is no confusion.

Define the symmetric weight function $\ka(x,y) = c(x,y) c(y,x)$. Note that if $w$ is a closed walk on a tree then for
every forward step $e_i$ of $w$ there is a unique accompanying step $e_j$ in the reverse direction
to $e_i$ at some time $j > i$. Indeed, $j$ is the first time $w$ traverses the reversal of $e_i$ after time $i$.
Pairing up every forward step with its accompanying reversal we see that
$$W_{2k}(T,x,c) = \sum_{w \in W_{2k}(T,x)} \; \prod_{i \,\text{forward time of}\,w} \ka(e_i).$$

Let $\mathrm{Dyck}(k)$ denote the set of all Dyck paths of length $2k$, which are the set of all
possible height profiles of walks in $W_{2k}(T,x)$. For a neighbour $y$ of $x$, let $W_{2k}(T,x,y,h,c)$
be the weighted sum over all walks in $W_{2k}(T,x)$ whose first step is towards $y$ and which has
height profile $h$, except without accounting for the first weighted step:
$$W_{2k}(T,x,y,h,c) = \sum_{\substack{w \in W_{2k}(T,x) \\ w_1 = y,\, h_w = h}} \;
\prod_{\substack{\text{forward times}\, i, \\ i > 1}} \ka(e_i).$$
Conditioning on the height profile and the first step of a walk gives
\begin{equation} \label{eqn:walkidentity}
W_{2k}(T,x,c) = \sum_{h \in \mathrm{Dyck}(k)} \; \sum_{y: y \sim x} \ka(x,y) W_{2k}(T,x,y,h,c).
\end{equation}

\begin{proposition} \label{prop:1}
Let $(T,\rt)$ be a unimodular tree with finite expected degree and no leaves almost surely.
Recall the edge rooted tree $(T,\rt,\rt')$ derived from $(T,\rt)$ via (\ref{eqn:edgerooted}).
If $h \in \mathrm{Dyck}(k)$, then
$$\E{W_{2k}(T,\rt, \rt', h, c)} \geq \Big \{ (k-1) H \left[\nb{T,\rt,\rt'} \mid (T,\rt,\rt') \right] + 2(k-1) \E{\log c(T,\rt,\rt')}  \Big \}.$$
\end{proposition}

\paragraph{\textbf{Proof of Proposition \ref{prop:1}}}
Jensen's inequality implies
\begin{equation} \label{eqn:prop1}
\E{W_{2k}(T,\rt, \rt', h, c)} \geq \exp \left \{ \E{\log W_{2k}(T,\rt,\rt',h,c)}\right \} .
\end{equation}
Let $(T,x,y) \in \mathcal{G}^{**}$ be an edge rooted tree with no leaves.
We define a probability distribution on the set $\{ w \in W_{2k}(T, x):w_1=y, h_w = h\}$.
Every element of this set is encoded as a sequence of edge rooted trees
$(T_1,e_1)$, $\ldots$, $(T_{2k},e_{2k})$, where $(T_1,e_1) = (T, x,y)$ and $(T_i,e_i)$ is
obtained from $(T_{i-1},e_{i-1})$ by moving along the $i$-th edge of the walk.
Therefore, consider the following probability distribution
$(T_1,f_1), \ldots, (T_{2k},f_{2k})$ on the set.

First, $(T_1,f_1) =  (T,x,y)$. Now consider a stack $S$ of forward times of $h$ that is initialized to $S = [1]$.
For $i > 1$, if $i$ is a forward time then set $(T_{i},f_{i}) = \nb{T_{i-1},f_{i-1}}$
and append $i$ to $S$ by updating $S = [S, i]$. If $i$ is a backward time, let $\ell$ be
the last element of $S$ and set $(T_i,f_i) = (T_{\ell}, \cev{f}_{\ell})$, that is, the reversal of $(T_{\ell}, f_{\ell})$.
Then update $S$ by removing $\ell$ from the end of $S$.
Figure \ref{fig:treewalk} provides an illustration.

Observe that the walk is at the root whenever $S$ in empty and then the next step is a forward step.
The stack $S$ is determined from $h$ and non random. Note that at a forward time $i$,
$(T_i,f_i)$ is conditionally independent of $(T_1,f_1), \ldots, (T_{i-2},f_{i-2})$ given
$(T_{i-1},f_{i-1})$ due to the Markov property of the NBW. During a backward
time $i$, $(T_i,f_i)$ is a (measurable) function of the history $(T_1,f_1), \ldots, (T_{i-1},f_{i-1})$.

\begin{figure}[hbtp]
\begin{center}
\includegraphics[scale=0.5]{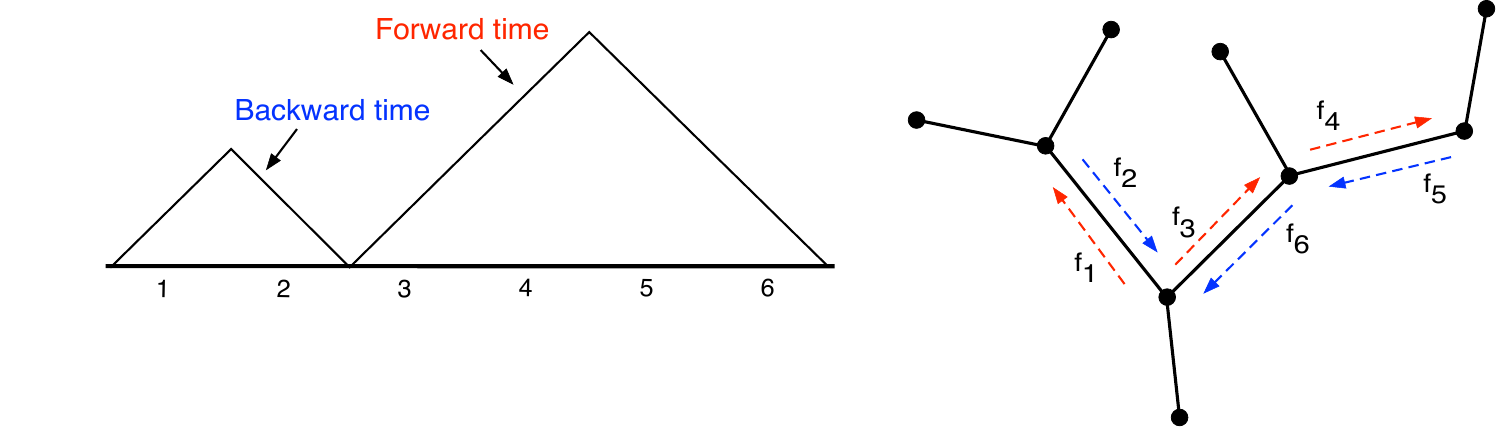}
\caption{A 6-step height profile and a closed walk on the tree associated to it.
Steps 3 and 4 each have two possible choices for a forward step.
The stack $S$ updates as $[1] \to [\,] \to [3] \to [3,4] \to [3] \to [\,]$.}
\label{fig:treewalk}
\end{center}
\end{figure}

\begin{lemma} \label{lem:weightentropy}
Let $(T,x,y)$, $h$ and $(T_1,f_1),\ldots,(T_{2k},f_{2k})$ be as above. Then,
$$
\log W_{2k}(T,x,y,h,c) \geq \sum_{\substack{\text{forward times} \,i,\\ i > 1}} H\left [ (T_i,f_i) \,|\, (T_{i-1},f_{i-1})\right ]
+ \E{\log \ka(T_i,f_i)}.
$$
\end{lemma}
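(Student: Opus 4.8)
The plan is to treat the randomized walk $(T_1,f_1),\dots,(T_{2k},f_{2k})$ as an importance-sampling law for the weighted count $W_{2k}(T,x,y,h,c)$, apply Jensen's inequality to bring the logarithm inside the sum, and then evaluate the resulting entropy term by the chain rule, using the two structural facts recorded just before the lemma: a forward-time move is a genuine $\mathrm{NBW}$ step (hence Markov), and a backward-time move is a deterministic function of the past.

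First I would let $q$ be the law of the sequence $(T_1,f_1),\dots,(T_{2k},f_{2k})$. Since the accompanying closed walk $w$ is uniquely determined by $h$, the first step $y$, and its forward steps, the recoding $w\mapsto\big((T_i,f_i)\big)_{i\le 2k}$ is a bijection between $\mathcal{W}:=\{w\in W_{2k}(T,x): w_1=y,\ h_w=h\}$ and the support of $q$, and I will identify $q$ with a law on $\mathcal{W}$. The point that needs checking here --- and this is the delicate step --- is that $q$ gives positive mass to every $w\in\mathcal{W}$: at each forward time the $\mathrm{NBW}$ step assigns positive probability to each admissible continuation (the out-degree $\deg(f_{i-1}^{+})-1$ is at least $1$ because $T$ has no leaves), while the backward-time moves are deterministic; hence every admissible choice of forward steps, and so every element of $\mathcal{W}$, is hit. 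Combined with $\ka\ge\delta^2>0$, which makes each summand $\prod_i \ka(e_i)$ (product over forward times $i>1$) strictly positive, this yields the exact representation
$$W_{2k}(T,x,y,h,c)=\sum_{w\in\mathcal{W}}q(w)\cdot\frac{\prod_i \ka(e_i)}{q(w)},$$
with no walk omitted.

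Next I would apply Jensen's inequality, using concavity of $\log$ with respect to $q$:
$$\log W_{2k}(T,x,y,h,c)\ \ge\ \E{\sum_i \log\ka(e_i)}\ +\ \E{-\log q(w)}\ =\ \sum_i \E{\log\ka(T_i,f_i)}\ +\ H\big[(T_1,f_1),\dots,(T_{2k},f_{2k})\big].$$
Here the first term uses that $(T_i,f_i)$ is the isomorphism class of $(T,w_{i-1},w_i)$ and $e_i=(w_{i-1},w_i)$, so $\ka(e_i)=\ka(T_i,f_i)$; the second term uses that the sequence is a bijective recoding of $w$, so $\E{-\log q(w)}$ is exactly its Shannon entropy. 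Finally, the chain rule gives
$$H\big[(T_1,f_1),\dots,(T_{2k},f_{2k})\big]=\sum_{i=1}^{2k}H\big[(T_i,f_i)\mid (T_{i-1},f_{i-1}),\dots,(T_1,f_1)\big],$$
and I would dispose of the terms one by one: the $i=1$ term is $0$ since $(T_1,f_1)=(T,x,y)$ is deterministic; each backward-time term is $0$ since $(T_i,f_i)=(T_\ell,\cev{f}_{\ell})$ for the stack-determined index $\ell<i$, hence is measurable with respect to the past; and for each forward time $i>1$, the Markov property of the $\mathrm{NBW}$ makes $(T_i,f_i)$ conditionally independent of $(T_1,f_1),\dots,(T_{i-2},f_{i-2})$ given $(T_{i-1},f_{i-1})$, so that term collapses to $H[(T_i,f_i)\mid (T_{i-1},f_{i-1})]$. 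Only the forward times $i>1$ survive, and combining this with the Jensen bound yields the claimed inequality.

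The main obstacle, as flagged above, is the full-support verification for $q$; this is what makes Jensen reproduce $\log W_{2k}$ exactly rather than a lower bound coming from a strict sub-sum. Everything after that is the routine Jensen-plus-chain-rule computation, and integrability causes no trouble because $(T,x,y)$ is fixed, $\mathcal{W}$ is finite (a closed walk of length $2k$ stays in the finite ball $B_k(x)$), and $\log\ka\ge 2\log\delta$ is bounded below.
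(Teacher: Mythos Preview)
Your proof is correct and follows essentially the same route as the paper. The only cosmetic difference is that the paper packages the key inequality as nonnegativity of the Kullback--Leibler divergence of $q$ from the Gibbs law $e^{E}/Z$ (yielding $\log Z\ge H[X]+\E{E(X)}$), whereas you obtain the same inequality via the importance-sampling identity plus Jensen; these are the same computation, and the subsequent chain-rule decomposition of the entropy is identical.
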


\begin{proof}
For two probability distributions of a countable set $\Omega$ with densities $p$ and $q$, the
Kullback-Leibler Divergence of $p$ from $q$ is
$D( p || q) = \sum_{\omega \in \Omega} \log \big(\frac{p(\omega)}{q(\omega)}\big)\,p(\omega)$.
The divergence is nonnegative, which gives
$$\sum -\log (q(\omega)) \, p(\omega) \geq \sum - \log(p(\omega)) \,p(\omega).$$
If $q$ has the form $q(\omega) = e^{E(\omega)}/Z$, then we get
$\log Z \geq H[X] + \E{E(X)}$, where $X$ is a random variable with probability density $p$.

We apply this to $\Omega = \{ w \in W_{2k}(T, x):w_1=y, h_w = h\}$, X being the
process $(T_1,f_1),\ldots,(T_{2k},f_{2k})$, and $E(w) = \sum_{\text{forward time}\,i > 1} \log \ka(e_i)$
for a walk $w \in \Omega$. We deduce that
$$\log W_{2k}(T,x,y,h,c) \geq H \left [(T_1,f_1), \ldots, (T_{2k},f_{2k}) \right] +
\sum_{\substack{\text{forward times} \,i,\\ i > 1}} \E{\log \ka(T_i,f_i)}.$$

We use the chain rule to calculate $H[(T_1,f_1),\ldots, (T_{2k},f_{2k})]$.
Note that $H[(T_1,f_1)]$ equals 0 because $(T_1,f_1)$ is non random. Therefore,
$$H\left [(T_1,f_1), \ldots, (T_{2k},f_{2k})\right ] =
\sum_{i=2}^{2k} H \left [ (T_i,f_i) \mid (T_{i-1},f_{i-1}), \ldots, (T_1,f_1) \right].$$
During a backward time $i$, $H[(T_i,f_i) \mid (T_{i-1},f_{i-1}) \ldots (T_1,f_1)] = 0$
because $(T_i,f_i)$ is determined from $(T_1,f_1), \ldots, (T_{i-1},f_{i-1})$ and the stack $S$.
At a forward time $i > 1$, the conditional independence of $(T_i,f_i)$ from
$(T_1,f_1), \ldots, (T_{i-2},f_{i-2})$ given $(T_{i-1},f_{i-1})$ implies
$$ H[(T_i,f_i) \mid (T_{i-1},f_{i-1}), \ldots, (T_1,f_1)] = H[(T_i,f_i) \mid (T_{i-1},f_{i-1})].$$
Therefore,
\begin{equation*} H\left [(T_1,f_1), \ldots, (T_{2k},f_{2k}) \right ] =
 \sum_{\substack{i\;\text{forward time} \\ i > 1}} H[(T_i,f_i) \mid (T_{i-1},f_{i-1})]. \qedhere
 \end{equation*}
\end{proof}

Let $(T_1,\rt_1,\rt'_1), \ldots, (T_{2k},\rt_{2k},\rt'_{2k})$ be the law of the process
$(T_1,f_1), \ldots, (T_{2k},f_{2k})$ started from the random edge rooted graph $(T,\rt,\rt')$.
Applying Lemma \ref{lem:weightentropy} to $(T,\rt,\rt')$ and taking expectation over $(T,\rt,\rt')$ gives
$$\E{\log W_{2k}(T,\rt,\rt',h,c)} \geq \sum_{\substack{\text{forward times} \,i,\\ i > 1}}
H [(T_i,\rt_i,\rt'_i) | (T_{i-1},\rt_{i-1},\rt'_{i-1})] + \E{\log \ka(T_i,\rt_i,\rt'_i)}.$$
We claim that every $(T_i,\rt_i,\rt'_i)$ has the law of $(T, \rt, \rt')$. This is certainly the case for $i=1$.
Assume that this is the case for each of the graphs $(T_1,\rt_1,\rt'_1), \ldots, (T_{i-1},\rt_{i-1},\rt'_{i-1})$.
Then $(T_i,\rt_i,\rt'_i)$ either has the law of the tree $\nb{T_{i-1},\rt_{i-1},\rt'_{i-1}}$, or the reversal of one of
$(T_1,\rt_1,\rt'_1), \ldots, (T_{i-1},\rt_{i-1},\rt'_{i-1})$. By Lemma \ref{lem:nbw}, both these operations preserve
the law of $(T,\rt,\rt')$. So the claim follows by induction.

Consequently, for every $i$,
\begin{align} \label{eqn:entropyexp}
H[(T_i,\rt_i,\rt'_i) \mid (T_{i-1},\rt_{i-1},\rt'_{i-1})] &= H[ \nb{(T,\rt,\rt')} \mid (T,\rt,\rt')], \\ \nonumber
\E{\log \ka(T_i,\rt_i,\rt'_i)} & = \E{\log \ka(T,\rt,\rt')}.
\end{align}
As there are $k-1$ forward times $i > 1$, we combine \eqref{eqn:entropyexp} with \eqref{eqn:prop1} to conclude that
\begin{equation*}
\E{W_{2k}(T,\rt,\rt',h,c)}  \geq \exp \big \{ (k-1) H[\nb{T,\rt,\rt'} \mid (T,\rt,\rt')] + (k-1) \E{\log \ka(T,\rt,\rt')} \big \}.
\end{equation*}
The edge reversal invariance of $(T,\rt,\rt')$ implies $\E{\log \ka(T,\rt,\rt')} = 2 \E{\log c(T,\rt,\rt')}$.
This completes the proof of Proposition \ref{prop:1}.

Theorem \ref{thm:cover} is proved using Proposition \ref{prop:1} as follows. Since $\ka(G,x,y) \geq \delta^2$ for
every edge rooted graph $(G,x,y)$, \eqref{eqn:walkidentity} implies
\begin{equation*}
\E{W_{2k}(T,\rt,c)} \geq \delta^2 \, |\mathrm{Dyck}(k)| \, \E{\dg} \E{W_{2k}(T,\rt,\rt' c)}.
\end{equation*}
The number of Dyck paths of length $2k$ is the Catalan number $\frac{1}{k+1} \binom{2k}{k}$.
It is easily seen that $|\mathrm{Dyck}(k)|^{1/2k} \to 2$ as $k \to \infty$. Proposition \ref{prop:1} thus implies
\begin{equation} \label{eqn:weightedspectralradius}
\liminf_{k \to \infty} \,\E{|W_{2k}(T,\rt,c)}^{1/2k} \geq
2 \exp \left \{ \frac{1}{2}H[\nb{T,\rt,\rt'} | (T,\rt,\rt')] + \E{\log c(T,\rt,\rt')} \right \}.
\end{equation}
Plugging the expression for $H[\nb{T,\rt,\rt'} \mid (T,\rt,\rt')]$ from (\ref{eqn:NBWentropy}),
and setting $c(G,x,y) \equiv 1$ in (\ref{eqn:weightedspectralradius}), provides the first
lower bound to $\rho(T)$ stated in Theorem \ref{thm:cover}. If $(T,\rt)$ has degrees bounded by $\Delta$
almost surely, then the first lower bound to $\rho_{\mathrm{SRW}}(T)$ stated in Theorem \ref{thm:cover}
follows from \eqref{eqn:weightedspectralradius} by having $c(G,x,y) = 1/\mathrm{deg}_G(x)$ and $\delta = 1/\Delta$.

The second group of lower bounds in Theorem \ref{thm:cover} are derived
from convexity. Jensen's inequality applied to $x \to x \log(x-1)$ for $x \geq 2$
gives $$\E{\dg \log(\dg-1)} \geq \E{\dg}\log(\E{\dg}-1),$$ which provides the second
lower bound to $\rho(T)$. Jensen's inequality applied to $x \to e^x$ for the probability measure
$f \to \E{\dg f}/\E{\dg}$ gives
$$\exp \left \{ \frac{\E{\dg \log \dg}}{\E{\dg}}\right \} \leq \frac{\E{\dg^2}}{\E{\dg}}.$$
Taking reciprocals above in combination with the bound
$$\E{\dg \log(\dg-1)} \geq \E{\dg}\log(\E{\dg}-1)$$ provides the second
stated lower bound to $\rho_{\mathrm{SRW}}(T)$.
\qed

\section{Alon-Boppana bound and volume growth: proofs of Theorems \ref{thm:spec}, \ref{thm:srwspec} and \ref{thm:vol}} \label{sec:thm2}

\subsection{Proof of Part I of Theorem \ref{thm:spec}}

Since $\mu_{G_n} \to \mu_{G}$ weakly, we have $\liminf_n \mu_{G_n}(|x| > a) \geq \mu_{G}(|x| > a)$ for every $a$.
Therefore, since $\rho(G) < \infty$, Lemma \ref{lem:spectralmass} below implies that
\begin{equation} \label{eqn:0}
\liminf_n \, \mu_{G_n} \left ( \{ |x| > \rho(T_G) - \eps\} \right) \geq
\frac{\E{|W_{2k}(T_{G},\rt)|} - (\rho(T_G)-\eps)^{2k}}{\rho(G)^{2k}}\;\;\text{for every}\;\;k.
\end{equation}
Since $\E{|W_{2k}(T_G,\rt)|}^{1/2k} \to \rho(T_G)$ as $k \to \infty$,
we may choose a large $K$ such that $\E{|W_{2K}(T_G,\rt)|} \geq (\rho(T_G) - \frac{\eps}{2})^{2K}$.
Then, by defining
$$c(\eps, \rho(G), \rho(T_G)) = \frac{(\rho(T_G) - \frac{\eps}{2})^{2K} - (\rho(T_G) - \eps)^{2K}}{\rho(G)^{2K}},$$
the inequality \eqref{eqn:0} applied to $k := K$ implies that $\liminf_n \, \mu_{G_n} ( \{ |x| > \rho(T_G) - \eps\} )
\geq c(\eps, \rho(G), \rho(T_G))$. This completes the proof of part I of Theorem \ref{thm:spec}. \qed

\begin{lemma} \label{lem:spectralmass}
Let $(H,\rt)$ be a unimodular network with $\rho(H) < \infty$.
For $0 < a < \rho(T_H)$ and any $k \geq 0$ we have
$$\mu_H \left ( \{ |x| > a \}\right) \geq 
\frac{ \E{|W_{2k}(T_H,\rt)|} - a^{2k}}{\rho(H)^{2k}}.$$
\end{lemma}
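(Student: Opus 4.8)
The plan is to bound the tail mass $\mu_H(\{|x|>a\})$ from below by comparing two moments of the spectral measure $\mu_H$: the $2k$-th moment, which equals $\E{|W_{2k}(H,\rt)|}$, and the total mass. First I would split the $2k$-th moment according to whether $|x|>a$ or $|x|\le a$:
$$
\E{|W_{2k}(H,\rt)|} = \int x^{2k}\, d\mu_H(x)
= \int_{|x|\le a} x^{2k}\, d\mu_H + \int_{|x|>a} x^{2k}\, d\mu_H.
$$
On the first piece, $x^{2k}\le a^{2k}$, so that integral is at most $a^{2k}\,\mu_H(\{|x|\le a\}) \le a^{2k}$. On the second piece, $|x|\le \rho(H)$ everywhere in the support, so $x^{2k}\le \rho(H)^{2k}$, giving $\int_{|x|>a} x^{2k}\, d\mu_H \le \rho(H)^{2k}\,\mu_H(\{|x|>a\})$. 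Rearranging yields
$$
\mu_H(\{|x|>a\}) \ge \frac{\E{|W_{2k}(H,\rt)|} - a^{2k}}{\rho(H)^{2k}}.
$$

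The remaining point is to replace $\E{|W_{2k}(H,\rt)|}$ in the numerator by $\E{|W_{2k}(T_H,\rt)|}$, which is exactly the inequality claimed in the lemma. This is immediate from the discussion of the universal cover in Section \ref{sec:unimodular}: the cover map $\pi\colon T_H\to H$ sends closed walks of length $2k$ from $\hat{\rt}$ to closed walks of length $2k$ from $\rt$ injectively, by unique path lifting, so pointwise $|W_{2k}(T_H,\hat\rt)|\le |W_{2k}(H,\rt)|$, and taking expectations gives $\E{|W_{2k}(T_H,\rt)|}\le\E{|W_{2k}(H,\rt)|}$. Substituting this lower bound for the numerator (valid since the denominator is positive) gives the stated inequality. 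Note one should observe that if the numerator as written is negative the inequality is trivially true, since $\mu_H$ is a probability measure; so we only need the bound when $\E{|W_{2k}(T_H,\rt)|} \ge a^{2k}$.

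There is essentially no obstacle here — the argument is a one-line second-moment/Markov-type estimate — but the mild subtlety to be careful about is that we are using $\rho(H)<\infty$ to guarantee that $\mu_H$ is compactly supported in $[-\rho(H),\rho(H)]$, so that the crude bound $x^{2k}\le\rho(H)^{2k}$ on the support is legitimate and all the integrals are finite. The hypothesis $a<\rho(T_H)$ is not actually needed for the displayed inequality itself; it will matter only when this lemma is later combined with Theorem \ref{thm:cover} and the convergence $\E{|W_{2k}(T_{G_n},\rt)|}\to\E{|W_{2k}(T_G,\rt)|}$ to produce a genuinely positive lower bound, by choosing $k$ large enough that $\E{|W_{2k}(T_G,\rt)|}$ exceeds $a^{2k}$ when $a=\rho(T_G)-\eps$.
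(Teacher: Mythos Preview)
Your proof is correct and follows essentially the same approach as the paper: split the $2k$-th moment of $\mu_H$ at $|x|=a$, bound the two pieces by $a^{2k}$ and $\rho(H)^{2k}\mu_H(\{|x|>a\})$ respectively, and invoke the injection of closed walks from $T_H$ into $H$ to replace $\E{|W_{2k}(H,\rt)|}$ by $\E{|W_{2k}(T_H,\rt)|}$. The paper's version records the slightly sharper intermediate bound with $\rho(H)^{2k}-a^{2k}$ in the denominator before discarding the $-a^{2k}$, but this is a cosmetic difference; your observation that the hypothesis $a<\rho(T_H)$ is inert for the displayed inequality is also correct.
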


\begin{proof}
Let $\nu = \mu_{H}(\{|x| > a \})$.
The moments of the spectral measure of $(H,\rt)$ satisfy
$$\int x^{2k} \,d \mu_{H} = \E{|W_{2k}(H,\rt)|} \geq \E{|W_{2k}(T_H,\rt)|}.$$
On the other hand, we may bound the moments from above as follows. Note
that $\mu_{H}(\{|x| > \rho(H)\}) = 0$ by definition of the spectral radius. Therefore,
\begin{align*}
\int x^{2k} \,d \mu_{H} &= \int_{|x| \leq a} x^{2k} \,d \mu_{H} + \int_{|x| > a} x^{2k} \,d \mu_{H}\\
&\leq a^{2k} \,\mu_H(\{|x| \leq a\}) + \rho(H)^{2k} \,\mu_H(\{|x| > a \})\\
&= a^{2k} + \nu \left( \rho(H)^{2k} - a^{2k} \right).
\end{align*}

Combining the lower and upper bounds on the moments we get that for every $k$,
\begin{equation*} \nu \geq \frac{\E{|W_{2k}(T_H,\rt)|} - a^{2k}}{\rho(H)^{2k} - a^{2k}}
\geq \frac{\E{|W_{2k}(T_H,\rt)|} - a^{2k}}{\rho(H)^{2k}}. \qedhere
\end{equation*}
\end{proof}

\subsection{Proof of Part II of Theorem \ref{thm:spec}}

\begin{lemma} \label{lem:core}
Let $G$ be a finite and connected graph with 2-core $G^{\mathrm{core}}$; recall it is obtained
by iteratively removing leaves from $G$ until a subgraph with no leaves remains.
If $G$ is not a tree then $d_{\mathrm{av}}(G^{\mathrm{core}}) \geq d_{\mathrm{av}}(G)$.
Moreover, $\sigma_j(G) \geq \sigma_j(G^{\mathrm{core}})$, where $\sigma_j(H) =0$ by convention if $j > |H|$.
(Recall $\sigma_j(H)$ is the $j$-th largest eigenvalue of $H$ in absolute value counted with multiplicity).
\end{lemma}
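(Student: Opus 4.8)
The plan is to obtain $G^{\mathrm{core}}$ by iteratively deleting leaves (vertices of degree $\leq 1$) from $G$ until none remain, and to track how both the average degree and the singular values behave under a single such deletion. Since $G$ is connected and not a tree, it contains a cycle, and no vertex on a cycle is ever deleted by this process; hence $G^{\mathrm{core}}$ is nonempty, and one checks it is exactly the maximal induced subgraph with minimum degree $\geq 2$ (any induced subgraph with no leaves avoids all iteratively-deleted vertices, and the terminal graph of the process has no leaves). So it suffices to prove the two claimed inequalities for the passage from a graph $H$ (connected, containing a cycle, with a leaf $v$) to $H' = H \setminus \{v\}$, and then iterate; note $H'$ may be disconnected, but it still contains the cycle, so we can apply the leaf-deletion step to whichever component still has leaves, and the argument below only uses that $v$ has degree $\leq 1$.

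For the average degree: deleting a vertex $v$ of degree $d_v \in \{0,1\}$ removes $d_v$ edges and one vertex, so $\sum_{x} \deg(x)$ drops by $2 d_v \leq 2$ while $|V|$ drops by $1$. Thus $d_{\mathrm{av}}(H') = \frac{2|E(H)| - 2 d_v}{|V(H)| - 1} \geq \frac{2|E(H)| - 2}{|V(H)| - 1}$, and this is $\geq d_{\mathrm{av}}(H) = \frac{2|E(H)|}{|V(H)|}$ precisely when $2|E(H)| \geq 2|V(H)|$, i.e. when $H$ has at least as many edges as vertices — which holds because $H$ is connected and not a tree (so $|E(H)| \geq |V(H)|$). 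Iterating, $d_{\mathrm{av}}(G^{\mathrm{core}}) \geq d_{\mathrm{av}}(G)$; the key point is that leaf-deletion only increases a ratio that starts out $\geq 1$, and the ``not a tree'' hypothesis is exactly what guarantees we never drop below the edge-vertex balance along the way (each intermediate graph still contains a cycle, hence still has $\geq$ as many edges as vertices in the component from which we delete).

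For the singular values: $A_{H'}$ is the principal submatrix of $A_H$ obtained by deleting the row and column indexed by $v$. Cauchy interlacing for symmetric matrices gives $\lambda_j(A_H) \geq \lambda_j(A_{H'}) \geq \lambda_{j+1}(A_H)$ for the eigenvalues in decreasing order; I would like to conclude the analogous statement for eigenvalues ordered by absolute value, i.e. $\sigma_j(H) \geq \sigma_j(H')$. This does not follow from one application of interlacing alone, so here is the cleaner route: deleting a leaf $v$ (degree exactly $1$, neighbour $u$) or an isolated vertex (degree $0$) — in the degree-$0$ case it is immediate since $A_H = A_{H'} \oplus (0)$, which only adds a zero eigenvalue and hence cannot decrease any $\sigma_j$. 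In the degree-$1$ case, I would argue that for each $t \geq 0$ the number of eigenvalues of $A_H$ with absolute value $> t$ is at least the number for $A_{H'}$: indeed, by Cauchy interlacing the eigenvalue counts of $A_H$ and $A_{H'}$ in any interval differ by at most one, and a leaf contributes a known trace/rank structure (e.g. $A_H$ and $A_{H'} \oplus (0)$ have the same rank parity obstruction is false in general, so instead) — more robustly, use the variational characterization: $\sigma_j(H') \leq \sigma_j(H)$ because restricting the quadratic form $x \mapsto \langle A_H x, x\rangle$ to the coordinate subspace $\{x_v = 0\} \cong \mathbb{R}^{V(H')}$ yields exactly the form of $A_{H'}$, and the Courant–Fischer min-max formula for the $j$-th largest $|{\cdot}|$ (equivalently, $\sigma_j$ is the $j$-th largest singular value, and singular values of a principal submatrix interlace those of the full symmetric matrix: $\sigma_j(A_H) \geq \sigma_j(A_{H'})$) gives the inequality directly. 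I expect the \textbf{main obstacle} to be pinning down this last point rigorously: ordinary Cauchy interlacing is for signed eigenvalues, and one must invoke the interlacing result for singular values of a matrix versus a submatrix obtained by deleting one row and column of a \emph{symmetric} matrix (so that deletion of the symmetric pair keeps it symmetric) — this is standard (e.g. via Weyl/Cauchy interlacing applied to $\pm A_H$ and a counting argument over thresholds $t$), but it is the step that needs care, and with the convention $\sigma_j(H) = 0$ for $j > |H|$ the inequality then persists through every leaf-deletion and hence for $G \to G^{\mathrm{core}}$.
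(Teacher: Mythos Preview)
Your approach is the same as the paper's: iteratively remove leaves and track both quantities across a single deletion. Two small corrections. First, removing a leaf from a connected graph keeps it connected (the leaf's unique neighbour still reaches every other vertex), so $H'$ is never disconnected and the degree-$0$ case never arises; you can drop that digression. Second, your flagged ``main obstacle'' dissolves: the singular-value inequality follows directly from ordinary Cauchy interlacing. With $\lambda_1 \geq \nu_1 \geq \lambda_2 \geq \cdots \geq \nu_{n-1} \geq \lambda_n$, note that $\nu_i > t$ forces $\lambda_i > t$, while $\nu_i < -t$ forces $\lambda_{i+1} < -t$; summing, $\#\{i : |\nu_i| > t\} \leq \#\{i : |\lambda_i| > t\}$ for every $t \geq 0$, which is exactly $\sigma_j(G') \leq \sigma_j(G)$ for all $j$. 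The paper asserts this step without elaboration, but your instinct to justify it via a threshold-counting argument from interlacing is precisely the right one.
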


\begin{proof}
Since $G$ is not a tree, $|E(G)| \geq |G|$. If $G'$ is obtained from $G$ by removing a leaf then
$d_{\mathrm{av}}(G') = 2(|E(G)|-1)/(|G|-1) \geq d_{\mathrm{av}}(G)$ since $|E(G)| \geq |G|$.
Moreover, the adjacency matrix of $G'$ is a principal minor of the adjacency matrix of $G$.
Suppose $\lambda_1 \geq \lambda_2 \geq \cdots \geq \lambda_{n}$ are the $n = |G|$ eigenvalues of $G$,
and $\nu_1 \geq \nu_2 \geq \cdots \geq \nu_{n-1}$ are the eigenvalues of $G'$.
From the Cauchy interlacing theorem we have $\lambda_1 \geq \nu_1 \geq \lambda_2 \geq \nu_2 \geq
\cdots \geq \nu_{n-1} \geq \lambda_n$. This implies that $\sigma_j(G) \geq \sigma_j(G')$ for every $j$.

The observations above imply $d_{\mathrm{av}}(G^{\mathrm{core}}) \geq d_{\mathrm{av}}(G)$
and $\sigma_j(G) \geq \sigma_j(G^{\mathrm{core}})$.
\end{proof}

We now prove part II of the theorem. Let $G_{n_i}$ be a subsequence
such that $\liminf_n \, \sigma_j(G_n) = \lim_i \, \sigma_j(G_{n_i})$. Clearly,
$\liminf_i \, 2 \sqrt{d_{\mathrm{av}}(G_{n_i})-1} \geq \liminf_n\, 2 \sqrt{d_{\mathrm{av}}(G_n)-1}$.
Therefore, it is enough to show that
$\liminf_i\, \sigma_j(G_{n_i}) \geq \liminf_i \, 2 \sqrt{d_{\mathrm{av}}(G_{n_i})-1}$.
Henceforth, we denote the subsequence $G_{n_i}$ as $G_n$ and $\sigma_j = \lim_i \, \sigma_j(G_{n_i})$.
In the new notation, we must show that
\begin{equation} \label{eqn:1}
\sigma_j \geq \liminf_n\, 2 \sqrt{d_{\mathrm{av}}(G_n)-1}\,.
\end{equation}

First, suppose it is the case that for an infinite subsequence $G_{n_k}$ of $G_n$ we have that
$|G^{\mathrm{core}}_{n_k}| \to \infty$. It suffices to show that
$\sigma_j \geq \liminf_k \, 2\sqrt{d_{\mathrm{av}}(G_{n_k})-1}$ because the latter limit infimum
is an upper bound to $\liminf_n \, 2\sqrt{d_{\mathrm{av}}(G_{n})-1}$. Let us denote
the subsequence $G_{n_k}$ as $H_n$. Thus, we must show that
\begin{equation} \label{eqn:2}
\sigma_j \geq \liminf_n\, 2 \sqrt{d_{\mathrm{av}}(H_n)-1}\,.
\end{equation}

The graphs $H^{\mathrm{core}}_{n}$ are connected, have no leaves and have maximum
degree at most $\Delta$. If $\rt_n$ is a uniform random root of $H^{\mathrm{core}}_n$ then
the unimodular networks $(H^{\mathrm{core}}_n,\rt_n)$ have a subsequential limit $\umn$.
Indeed, the subset of $\G$ consisting of rooted isomorphism classes of graphs of maximal degree
$\Delta$ is compact because there are at most $\Delta^r$ possibilities for the $r$-neighbourhood
of the root of such graphs. Prokhorov's theorem states that Borel probability measures on a compact
metric space is compact in the weak topology. This provides a subsequential limit of
$(H^{\mathrm{core}}_n,\rt_n)$ in the local topology.

Let us reduce to a convergent subsequence $(H^{\mathrm{core}}_{n_i},\rt_{n_i})$, converging to $\umn$.
Let $(T,\rt)$ be the universal cover of $\umn$. Then $(T,\rt)$ has no leaves and has maximum degree at
most $\Delta$ almost surely because $\umn$ inherits these properties from the sequence $H^{\mathrm{core}}_{n_i}$.
Part I of the theorem implies for every $\eps > 0$,
$$\liminf_i \, \mu_{H^{\mathrm{core}}_{n_i}}( \{|x| > \rho(T) - \eps\}) > 0.$$
Since $|H^{\mathrm{core}}_{n_i}| \to \infty$ by assumption, $\sigma_j(H^{\mathrm{core}}_{n_i}) \geq \rho(T) - \eps$
for all large $i$ due to the bound above. From Theorem \ref{thm:cover} we
have $\rho(T) \geq 2 \sqrt{\E{\deg(\rt)}-1} = \lim_i \, 2 \sqrt{ d_{\mathrm{av}}(H^{\mathrm{core}}_{n_i}) -1}$.
Therefore, since $\eps$ is arbitrary,
\begin{equation} \label{eqn:3}
\liminf_i \, \sigma_j(H^{\mathrm{core}}_{n_i}) \geq \liminf_i \, 2 \sqrt{ d_{\mathrm{av}}(H^{\mathrm{core}}_{n_i}) -1}\,.
\end{equation}

Lemma \ref{lem:core} implies $\sigma_j(H_{n_i}) \geq \sigma_j(H^{\mathrm{core}}_{n_i})$.
Taking limit infimum in $i$ implies
\begin{equation} \label{eqn:4} \sigma_j \geq \liminf_i \sigma_j(H^{\mathrm{core}}_{n_i}). \end{equation}
Indeed, $\sigma_j$ is the limit of $\sigma_j(H_{n_i})$ because $H_{n_i}$ is a subsequence of $G_n$
and $\sigma_j(G_n)$ converges to $\sigma_j$ by assumption. Lemma \ref{lem:core} also implies that
\begin{equation} \label{eqn:5}
2 \, \sqrt{ d_{\mathrm{av}}(H^{\mathrm{core}}_{n_i}) -1} \geq 2 \, \sqrt{ d_{\mathrm{av}}(H_{n_i}) -1}\,.
\end{equation}
The required inequality in \eqref{eqn:2} follows by combining the inequality in \eqref{eqn:4} with
the one from \eqref{eqn:3}, followed by the inequality in \eqref{eqn:5}.

We are left to consider the case where the core graphs of the sequence $G_n$ have bounded size,
possibly being empty. Due to compactness, as explained above, the unimodular networks $(G_n,\rt_n)$,
where $\rt_n$ is a uniform random root of $G_n$, have a subsequential limit $\umn$.
We claim that $\umn$ is an infinite unimodular tree of expected degree $2$.

Indeed, $\umn$ is infinite almost surely because $G_n$ is connected and $|G_n| \to \infty$.
To see that $\umn$ is a tree observe that the graph induced on $G_n \setminus G^{\mathrm{core}}_n$
contains no cycles. Thus $B_r(G_n,\rt_n)$ is a tree so long as $\rt_n$ is not within distance $r$ of
$G^{\mathrm{core}}_n$, and this happens with probability at least $1 - \frac{|G^{\mathrm{core}}_n| \Delta^r}{|G_n|} \to 1$.
This implies that the finite neighbourhood sampling statistics of $\umn$ are supported on trees, and thus, $\umn$ is a tree.

Now we argue that $\umn$ has expected degree 2.
Suppose $l_n$ is the number of vertices removed from $G_n$ during the leaf peeling
procedure that generates $G^{\mathrm{core}}_n$. Then $l_n \to \infty$ as $n \to \infty$
because $|G^{\mathrm{core}}_n|$ remains bounded. Moreover,
$$|G_n| = |G^{\mathrm{core}}_n| + l_n \;\;\text{and}\;\; |E(G_n)| = |E(G^{\mathrm{core}}_n)| +l_n.$$
Therefore,
$$d_{\mathrm{av}}(G_n) = 2 \, \frac{|E(G^{\mathrm{core}}_n)| +l_n}{|G^{\mathrm{core}}_n| +l_n} \, \longrightarrow \, 2,$$
which shows that $\umn$ has expected degree 2 because $d_{\mathrm{av}}(G_n)$ converges to it
due to the graphs $G_n$ having uniformly bounded degrees. 

Now we claim that $\rho(G) \geq 2$. As $\umn$ is infinite, there is an infinite one ended path
starting from $\rt$. Therefore, $|W_{2k}(G,\rt)|$ is at least the number of closed walks of length
$2k$ on an infinite one ended path starting from its initial leaf vertex. This
quantity is the Catalan number $C_k = \frac{1}{k+1}\binom{2k}{k}$. Thus,
$\E{|W_{2k}(G,\rt)|} \geq C_k$ and we conclude that $\rho(G) \geq 2$ because $C_k^{1/2k} \to 2$.

The tree $\umn$ is its own universal cover. Using part I of the theorem and arguing
as before we deduce that $\sigma_j = \lim_n \, \sigma_j(G_n) \geq 2$. On the other hand,
$$\liminf_n \, 2 \sqrt{d_{\mathrm{av}}(G_n) - 1} \leq 2 \sqrt{\mathbb{E}_{\umn}[\deg(\rt)]-1} = 2.$$
These bounds imply the required inequality in \eqref{eqn:1} and completes the proof of part II of the theorem.
\qed

\subsection{Proof of Theorem \ref{thm:srwspec}}
For a finite graph $G$ let us denote
$$ \bar{D}(G) = \frac{2 \sqrt{d_{\mathrm{av}}(G)-1}}{\frac{1}{2|E(G)|} \sum_{x \in G} (\deg{x})^2}.$$
Note that $\bar{D}(G)$ is a continuous function in the topology of local convergence since
$2 |E(G)| = d_{\mathrm{av}}(G) |G|$.

First we shall consider the proof when the sequence of graph $G_n$ has no leaves.
Then given $\eps > 0$, consider a subsequence $G_{n_i}$ such that
$\mu^{\mathrm{SRW}}_{G_{n_i}}(\{ |x| > \bar{D}(G_{n_i}) - \eps\})$ converges to the limit infimum of
$\mu^{\mathrm{SRW}}_{G_n}(\{ |x| > \bar{D}(G_n) - \eps\})$.
Due to compactness, there is a further locally convergent subsequence $(G_{n_{i_j}}, \rt_{n_{i_j}}) \to (G,\rt)$.
It suffices to prove the claim for this convergent subsequence.
Denote the sequence of graphs $G_{n_{i_j}}$ as $H_n$.

Arguing as in the proof of part I of Theorem \ref{thm:spec} we see that
$$\liminf_{n \to \infty} \, \mu^{\mathrm{SRW}}_{H_n} \left ( \left \{ |x| > \rho_{\mathrm{SRW}}(T_G) - \frac{\eps}{2} \right \} \right) > 0, $$
where $\umn$ is the limit. Theorem \ref{thm:cover} applied to its universal cover $T_G$ implies
$$\rho_{\mathrm{SRW}}(T_G) \geq \bar{D}(G,\rt) =: \frac{2 \,\E{\dg} \sqrt{\E{\dg}-1}}{\E{\dg^2}}.$$
Observe that $\bar{D}(H_n) \to \bar{D}(G,\rt)$ because $(H_n,\rt_n)$ converges to $\umn$
and all the graphs are of bounded degree. Thus, for all sufficiently large $n$, we have
$\bar{D}(G,\rt) \geq \bar{D}(H_n) - \frac{\eps}{2}$. For any such $n$,
$$ \mu^{\mathrm{SRW}}_{H_n}(\{ |x| > \bar{D}(H_n) - \eps\}) \geq
\mu^{\mathrm{SRW}}_{H_n}(\{ |x| >  \rho_{\mathrm{SRW}}(T_G) - \frac{\eps}{2}\}).$$
This implies the required claim for the sequence $H_n$ and completes the
proof of the theorem when the sequence $G_n$ has no leaves.

For the general case of $|G^{\mathrm{core}}_n| / |G_n| \to 1$, we will use the following two lemmas.
\begin{lemma} \label{lem:srw1}
Let $G$ be a finite and connected graph with a non-empty 2-core. Then
$$ \mu_G^{\mathrm{SRW}} ( |x| > a) \geq
\mu_{G^{\mathrm{core}}}^{\mathrm{SRW}}  (|x| > a) - 10\log \big (|G|/|G^{\mathrm{core}}|\big).$$
\end{lemma}

\begin{proof}
Suppose that $G$ has $n$ vertices and $|G^{\mathrm{core}}| = |G| - m$. Let $P$ be the
Markov operator of $G$, $D$ the diagonal matrix of vertex degrees, and $A$ the adjacency matrix.
Write $d_x = \mathrm{deg}(x)$ for a vertex $x$ of $G$.	

Observe that $P = D^{-1}A$, so $P = D^{-1/2} (D^{-1/2}A D^{-1/2}) D^{1/2}$.
Hence $P$ has the same eigenvalues as the symmetric matrix $L = D^{-1/2} A D^{-1/2}$.
If $\mu_L$ is the empirical measure for the eigenvalues of $L$, then $\mu^{\mathrm{SRW}}_G(|x| > a) = \mu_L (|x| > a)$.

Let $u$ be a leaf of $G$ with neighbour $v$ and consider the reduced graph $G' = G \setminus \{u\}$.
Let $\hat{L}$ be the submatrix of $L$ obtained by removing the row and column associated to vertex $u$.
By the Cauchy interlacing theorem,
\begin{equation} \label{eqn:srw1a}
\mu_L (|x| > a) \geq \mu_{\hat{L}} (|x| > a) (1- \frac{1}{n}) \geq \mu_{\hat{L}} (|x| > a)  - \frac{1}{n}.
\end{equation}

If $L'$ is the $L$-matrix associated to $G'$ then
$$L'(x,y) = \frac{\ind{x \sim y \,\text{in}\, G}}{\sqrt{(d_x - \ind{x=v})(d_y - \ind{y=v})}} \quad (x,y \neq u).$$
Note $L(x,y) = \ind{x \sim y\, \text{in} \, G}/\sqrt{d_x d_y}$. Consequently, $L' = \hat{L} + E$ where
$$ E(x,y) = \frac{d_y^{-1/2} \ind{x=v, \, y \sim b} + d_x^{-1/2}\ind{y=v, \, x \sim v}}{\sqrt{d_v-1} (\sqrt{d_v} + \sqrt{d_v -1})} \quad (x,y \neq u).$$
The matrix $E$ is symmetric with rank at most 2. Indeed, only the row and column associated to vertex $v$
is non-zero. So by the Weyl interlacing theorem (see \cite{Bor}),
\begin{equation} \label{eqn:srw1b}
\big | \mu_{L'}(|x| > a) - \mu_{\hat{L}}(|x| > a) \big| \leq \frac{2}{n-1}.
\end{equation}

If we combine \eqref{eqn:srw1a} with \eqref{eqn:srw1b} we infer that
$$ \mu_{G}^{\mathrm{SRW}}(|x| > a) \geq \mu_{G'}^{\mathrm{SRW}}(|x| > a) - \frac{3}{n-1}.$$
It follows from iteration that
\begin{align*}\mu_{G}^{\mathrm{SRW}}(|x| > a) & \geq \mu_{G^{\mathrm{core}}}^{\mathrm{SRW}}(|x| > a)
- 3\Big( \frac{1}{n-1} + \cdots + \frac{1}{n-m}\Big) \\
& \geq  \mu_{G^{\mathrm{core}}}^{\mathrm{SRW}}(|x| > a) - 10 \log(|G|/|G^{\mathrm{core}}|). \qedhere
\end{align*}
\end{proof}

\begin{lemma} \label{lem:srw2}
Let $G$ be a finite and connected graph with vertex degrees at most $\Delta$.
Let $\frac{p}{q}$ be a rational number that is not an eigenvalue of the Markov operator of $G$
and suppose that $gcd(p,q) = 1$. There is a constant depending on $q$ and $\Delta$ such that
for $0 < \delta < 1$,
$$\mu_{G}^{\mathrm{SRW}}\Big(\big[\frac{p}{q}-\delta, \frac{p}{q}+\delta \big]\Big) \leq \frac{\mathrm{const}(q,\Delta)}{|\log \delta|}.$$
\end{lemma}

\begin{proof}
Set $\mu = 	\mu_{G}^{\mathrm{SRW}}([(p/q)-\delta, (p/q)+\delta])$, let $P$ denote the Markov operator of $G$,
and suppose $G$ has $n$ vertices. We may assume $|\frac{p}{q}| \leq 2$ for otherwise $\mu$ is zero.

Consider the determinant of $(p/q)I - P$ in two different ways. On the one hand, $P$ has a full set of
eigenvalues inside $[-1,1]$, which implies that
\begin{equation} \label{eqn:srw2a}
\big |\mathrm{det}\Big (\frac{p}{q}I - P \Big) \big | = \prod_{\text{eig.~val.~}\,\lambda}\,  \big | \frac{p}{q} - \lambda \big|
\leq \delta^{n \mu} \, 3^{n(1-\mu)}.
\end{equation}
On the other hand, consider $\ell = lcm (\mathrm{deg}(x); x \in G)$ which is at most $lcm(1, 2, \ldots, \Delta)$.
Now $\mathrm{det}\big((p/q)I-P\big) = (q\ell)^{-n} \mathrm{det}(p\ell I - q\ell P)$,
and the matrix $p\ell I - q\ell P$ has integer entries as well as a non-zero determinant. So $| \mathrm{det}(p\ell I - q\ell P)| \geq 1$.
This implies that
\begin{equation} \label{eqn:srw2b}
\big |\mathrm{det}\Big (\frac{p}{q}I - P \Big) \big |  \geq (q \ell)^{-n}.
\end{equation}

Comparing \eqref{eqn:srw2a} with \eqref{eqn:srw2b} provides the inequality from the lemma.
\end{proof}

To conclude the proof suppose $G_n$ is a sequence of graphs as in the theorem and $\eps > 0$.
It suffices to consider only rational values of $\eps$ with $0 < \eps < 1$.
Due to having bounded degrees and $|G^{\mathrm{core}}_n| / |G_n| \to 1$, it is easy to see that
$|\bar{D}(G_n) - \bar{D}(G^{\mathrm{core}}_n)| \to 0$. Now if $\bar{D}(G_n) \leq \bar{D}(G^{\mathrm{core}}_n)$,
then $\mu_{G_n}^{\mathrm{SRW}}( |x| > \bar{D}(G_n) -\eps) \geq \mu_{G_n}^{\mathrm{SRW}}( |x| > \bar{D}(G^{\mathrm{core}}_n) -\eps)$.

If not, consider a rational number $r$ such that $r$ is not an eigenvalue of the Markov operator of $G_n$,
$r \leq \bar{D}(G^{\mathrm{core}}_n) - \eps$, and $|r - \bar{D}(G_n) + \eps| \leq 2 |\bar{D}(G^{\mathrm{core}}_n) - \bar{D}(G_n)|$.
Since the graphs have degrees bounded by $\Delta$ it is easy to see that the denominator of $r$ remains bounded
in terms of on $\eps$ and $\Delta$. Then by Lemma \ref{lem:srw2} with $\delta = 2 |\bar{D}(G^{\mathrm{core}}_n) - \bar{D}(G_n)|$,
since $r \leq \bar{D}(G^{\mathrm{core}}_n) - \eps \leq \bar{D}(G_n) - \eps \leq r +\delta$, we find that
\begin{equation} \label{eqn:srw2c}
 \mu_{G_n}^{\mathrm{SRW}} (|x| > \bar{D}(G_n) -\eps) \geq
\mu_{G_n}^{\mathrm{SRW}} (|x| > \bar{D}(G^{\mathrm{core}}_n) -\eps) -
\frac{\mathrm{const}(\eps, \Delta)}{| \log (|\bar{D}(G^{\mathrm{core}}_n) - \bar{D}(G_n)|)|}.
\end{equation}

The inequality \eqref{eqn:srw2c} thus holds irrespective of the ordering between $\bar{D}(G_n)$ and
$\bar{D}(G^{\mathrm{core}}_n)$. By Lemma \ref{lem:srw1}, we may replace $\mu_{G_n}^{\mathrm{SRW}}$
by $\mu_{G^{\mathrm{core}}_n}^{\mathrm{SRW}}$ on the right hand side of \eqref{eqn:srw2c} while
incurring a penalty of vanishing order $\log( |G_n|/|G^{\mathrm{core}}_n)$. Then considering the limit
infimum and applying the previous case to $G^{\mathrm{core}}_n$ leads to the theorem. \qed

It may be interesting to see to what extent Theorrem \ref{thm:srwspec} holds when $G^{\mathrm{core}}_n$
only occupies a positive fraction of $G_n$.

\subsection{Proof of Theorem \ref{thm:vol}} \label{sec:thm3}

Let $(T,\rt)$ be a unimodular tree with $\E{\dg} < \infty$ and having no leaves
almost surely. Let $S_r(T, x) = \{v \in V(T): \mathrm{dist}_T(x,v) = r\}$. Recall
the height profile of a walk and the notation $W_{2k}(G,x,y,h,c)$ from Section \ref{sec:thm1} (around \eqref{eqn:walkidentity}).
The vertices in $S_r(T,x)$ are in bijection with walks in $W_{2r}(T,x)$ whose height profile is
the Dyck path consisting of $r$ forward steps followed by $r$ backward steps.
Let $h$ denote this particular height profile. Then, with $c(x,y) \equiv 1$,
$$ |S_r(T,x)| = \sum_{y \sim x} |W_{2r}(T,x,y,h,c)|.$$
Theorem \ref{thm:vol} now follows from Proposition \ref{prop:1}.
\qed

\section{Future directions}
It is shown in \cite{AGV} that if an infinite $d$-regular unimodular network has spectral radius $2\sqrt{d-1}$
then it must be the $d$-regular tree. It is also proved that if a sequence of finite, connected,
$d$-regular graphs $G_n$ converges to the $d$-regular tree locally then
apart from $o(|G_n|)$ short cycles, the smallest cycle in $G_n$ has length of order at least
$\log \log |G_n|$. Little is known about such results for arbitrary unimodular networks.
Suppose a sequence of finite and connected graphs $G_n$ of growing size share a
common universal cover $T$. If the spectral measures of the $G_n$ concentrate on
$[-\rho(T),\rho(T)]$ as $n \to \infty$ then does $G_n$ converge to $T$ locally?

\end{document}